\newtheorem{theorem}{Theorem}
\newtheorem{corollary}{Corollary}
\newtheorem{lemma}{Lemma}
\newtheorem{proposition}{Proposition}
\newtheorem{remark}{Remark}
\newenvironment{proof}[1][Proof]{\noindent\textbf{#1.} }{\ \rule{0.5em}{0.5em}}
\begin{document}

\title{Zeros of orthogonal polynomials generated by the Geronimus perturbation of measures}
\author{Am\'ilcar Branquinho$^{1}$, Edmundo J. Huertas$^{2,\dag}$, Fernando
R. Rafaeli$^{3}$\thanks{
} \\
$^{1,2}$CMUC, Departamento de Matemática (FCTUC)\\
Universidade de Coimbra, Portugal\\
ajplb@mat.uc.pt, ehuertas@mat.uc.pt\\
$^{3}$Faculdade de Matem\'atica,\\
Universidade Federal de Uberl\^andia (UFU), 38.408-100 Uberl\^andia, Brazil\\
fernando.rodrigo.rafaeli@gmail.com}
\date{\emph{(\today)}}
\maketitle

\begin{abstract}
This paper deals with monic orthogonal polynomial sequences (MOPS in short)
generated by a Geronimus canonical spectral transformation of a positive
Borel measure $\mu $, i.e., 
\begin{equation*}
\frac{1}{(x-c)}d\mu (x)+N\delta (x-c),
\end{equation*}%
for some free parameter $N\in \mathbb{R}_{+}$\ and shift $c$. We analyze the
behavior of the corresponding MOPS. In particular, we obtain such a behavior
when the mass $N$ tends to infinity as well as we characterize the precise
values of $N$ such the smallest (respectively, the largest) zero of these
MOPS is located outside the support of the original measure ~$\mu $. When $%
\mu $ is semi-classical, we obtain the ladder operators and the second order
linear differential equation satisfied by the Geronimus perturbed MOPS, and
we also give an electrostatic interpretation of the zero distribution in
terms of a logarithmic potential interaction under the action of an external
field. We analyze such an equilibrium problem when the mass point of the
perturbation $c$ is located outside the support of $\mu $.

\medskip

\textbf{AMS Subject Classification:} 30C15

\medskip

\textbf{Key Words and Phrases:} Orthogonal polynomials, Canonical spectral
transformations of measures, Geronimus Zeros, Interlacing, Monotonicity,
Laguerre and Jacobi Polynomials, Asymptotic behavior, Electrostatic
interpretation, Logarithmic potential.

\medskip

\dag Corresponding author.
\end{abstract}


\section{Introduction}

\label{[SECTION-1]-Intro}



\subsection{Geronimus perturbation of a measure}


In the last years some attention has been paid to the so called canonical
spectral transformations of measures. Some authors have analyzed them from
the point of view of Stieltjes functions associated with such a kind of
perturbations (see \cite{Z-JCAM97}) or from the relation between the
corresponding Jacobi matrices (see \cite{Y-BKMS02}). The present
contribution is focused on the behavior of zeros of monic orthogonal
polynomial sequences (MOPS in the sequel) associated with a particular
transformation of measures called the \textit{Geronimus canonical
transformation on the real line}. Let $\mu $ be an absolutely continuous
measure with respect to the Lebesgue measure supported on a finite or
infinite interval $E= \mathrm{supp}(\mu )$, such that $C_{0}(E)=[a,b]%
\subseteq \mathbb{R}$. The basic Geronimus perturbation of $\mu $ is defined
as%
\begin{equation}
d\nu _{N}(x)=\frac{1}{(x-c)}d\mu (x)+N\delta (x-c),  \label{[S1]-GeronimusTr}
\end{equation}%
with $N\in \mathbb{R}_{+}$, $\delta (x-c)$ the Dirac delta function in $x=c$%
, and the shift of the perturbation verifies $c\not\in E$. Observe that it
is given simultaneously by a rational modification of $\mu $ by a positive
linear polynomial whose real zero $c$ is the point of transformation (also
known as the \textit{shift} of the transformation) and the addition of a
Dirac mass at the point of transformation as well.

This transformation was introduced by Geronimus in the seminal papers \cite%
{G-HNeds40} and \cite{G-ZMOK40} devoted to provide a procedure of
constructing new families of orthogonal polynomials from other orthogonal
families, and also was studied by Shohat (see \cite{S-TAMS37}) concerning
about mechanical quadratures. The problem was revisited by Maroni in \cite%
{M-PMH90}, into a more general algebraic frame, who gives an expression for
the MOPS associated with (\ref{[S1]-GeronimusTr}) in terms of the so called
co-recursive polynomials of the classical orthogonal polynomials. In the
past decade, Bueno and Marcellán reinterpreted this perturbation in the
framework of the so called discrete Darboux transformations, $LU$ and $UL$
factorizations of shifted Jacobi matrices \cite{BM-LAA04}. This
interpretation as Darboux transformations, together with other canonical
transformations (Christoffel and Uvarov), provide a link between orthogonal
polynomials and discrete integrable systems (see \cite{BB-JDEA09}, \cite%
{SZ-MAA95} and \cite{SZ-JPA97}). More recently, in \cite{BDT-NA10} the
authors present a new computational algorithm for computing the Geronimus
transformation with large shifts, and \cite{DM-NA13} concerns about a new
revision of the Geronimus transformation in terms of symmetric bilinear
forms in order to include certain Sobolev and Sobolev--type orthogonal
polynomials into the scheme of Darboux transformations.

In order to justify the relevance of this contribution, we point out that
the behavior of the zeros of orthogonal polynomials is extensively studied
because of their applications in many areas of mathematics, physics and
engineering. Following this premise, the purpose of this paper is twofold.
First, using a similar approach as\ was done in \cite{HMR-AMC12}, we provide
a new connection formula for the Geronimus perturbed MOPS, which will be
crucial to obtain sharp limits (and the speed of convergence to them) of
their zeros. We provide a comprehensive study of the zeros in terms of the
free parameter of the perturbation $N$, which somehow determines how
important the perturbation on the classical measure $\mu $ is. Notice that
this work also concerns with the behavior of the eigenvalues of the monic
Jacobi matrices associated to certain Darboux transformations with shift $c$
and free parameter $N$ studied in \cite{BDT-NA10}. Second, from the
aforementioned new connection formula we recover (from an alternative point
of view) a connection formula already known in the literature (see \cite%
{M-PMH90}) in terms of two consecutive polynomials of the original measure $%
\mu $. We also obtain explicit expressions for the ladder operators and the
second order differential equation satisfied by the Geronimus perturbed
MOPS. When the measure $\mu $ is semi-classical, we also obtain the
corresponding electrostatic model for the zeros of the Geronimus perturbed
MOPS, showing that they are the electrostatic equilibrium points of positive
unit charges interacting according to a logarithmic potential under the
action of an external field (see, for example, Szeg\H{o}'s book \cite[%
Section 6.7]{Sze75}, Ismail's book \cite[Ch. 3]{Ism05} and the references
therein).

The structure of the paper is as follows. The rest of this Section is
devoted to introduce without proofs some relevant material about modified
inner products and their corresponding MOPS. In Section \ref%
{[SECTION-2]-MainRes} we provide our main results. We obtain a new
connection formula for orthogonal polynomials generated by a basic Geronimus
transformation of a positive Borel measure $\mu $, sharp bounds and speed of
convergence to them for their real zeros, and the ladder operators and the
second linear differential equation that they satisfy. The results about the
zeros follows from a lemma concerning the behavior of the zeros of a linear
combination of two polynomials. In Section \ref{[SECTION-3]-Proofs}, we
proof all the result provided in the former Section. Finally, in Section \ref%
{[SECTION-4]-Examples}, we explore these results for the Geronimus perturbed
Laguerre and Jacobi MOPS. For $\mu $ being semi-classical, we obtain the
corresponding electrostatic model for the zeros of the Geronimus perturbed
MOPS as equilibrium points in a logarithmic potential interaction of
positive unit charges under the presence of an external field. We analyze
such an equilibrium problem when the mass point is located outside the
support of the measure ~$\mu$, and we provide explicit formulas for the
Laguerre and Jacobi weight cases.


\subsection{Modified inner products and notation}


Let $\mu $\ be a positive Borel measure $\mu $, with existing moments of all
orders, and supported on a subset $E\subseteq \mathbb{R}$ with infinitely
many points. Given such a measure, we define the standard inner product $%
\langle \cdot ,\cdot \rangle _{\mu }:\mathbb{P}\times \mathbb{P}\rightarrow 
\mathbb{R}$ by%
\begin{equation}
\langle f,g\rangle _{\mu }=\int_{E}f(x)g(x)d\mu (x),\quad f,g\in \mathbb{P},
\label{[S1]-InnProd-1}
\end{equation}%
where $\mathbb{P}$\ is the linear space of the polynomials with real
coefficients, and the corresponding norm $||\cdot ||_{\mu }:\mathbb{P}%
\rightarrow \lbrack 0,+\infty )$ is given, as usual, by%
\begin{equation*}
||f||_{\mu }=\sqrt{\int_{E}|f(x)|^{2}d\mu (x)},\quad f\in \mathbb{P}.
\end{equation*}%
Let $\{{P_{n}\}}_{n\geq 0}$ be the MOPS associated with $\mu $. It is very
well known that the former MOPS satisfy the three term recurrence relation%
\begin{eqnarray}
xP_{n}(x) &=&P_{n+1}(x)+\beta _{n}P_{n}(x)+\gamma _{n}P_{n-1}(x),
\label{[S1]-3TRR-Pn} \\
P_{-1}(x) &=&0,\quad P_{0}(x)=1.  \notag
\end{eqnarray}%
If%
\begin{equation}
K_{n}(x,y)=\sum\limits_{k=0}^{n}\frac{P_{k}(x)P_{k}(y)}{||P_{k}||_{\mu }^{2}}
\label{[S1]-Kernel-n}
\end{equation}%
denotes the corresponding $n$-th kernel polynomial, according to the
Christoffel-Darboux formula, for every $n\in \mathbb{N}$ we have%
\begin{equation*}
K_{n}(x,y)=\frac{P_{n+1}(x)P_{n}(y)-P_{n+1}(y)P_{n}(x)}{(x-y)}\frac{1}{%
||P_{n}||_{\mu }^{2}}.
\end{equation*}%
Notice that this structures satisfy the well-known \textquotedblleft
reproducing property\textquotedblright\ of the $n$-th kernel polynomial%
\begin{equation*}
\int_{E}K_{n}\left( x,y\right) f\left( x\right) d\mu (x)=f\left( y\right)
\end{equation*}%
for any polynomial $f\in \mathbb{P}$\ with $\deg \,(f)\leq n$.

Here and subsequently, $\{{P_{n}^{c,[k]}\}}_{n\geq 0}$ denotes the MOPS with
respect to the modified inner product%
\begin{equation}
\langle f,g\rangle _{\mu ,[k]}=\int_{E}f(x)g(x)(x-c)^{k}d\mu (x),
\label{[S1]-InnProd-2}
\end{equation}%
where $c\notin E= \mathrm{supp}(\mu )$. The polynomials $\{{P_{n}^{c,[k]}\}}%
_{n\geq 0}$ are orthogonal with respect to a polynomial modification of the
measure $\mu $ called the $k$\textit{-iterated Christoffel perturbation}. If 
$k=1$ we have the \textit{Christoffel canonical transformation of the
measure }$\mu $\textit{\ }(see \cite{Z-JCAM97} and \cite{Y-BKMS02}). It is
well known that $P_{n}^{c,[1]}(x)$ is the monic kernel polynomial which can
be represented as (see \cite[(7.3)]{Chi78})%
\begin{equation}
P_{n}^{c,[1]}(x)=\frac{1}{(x-c)}\left( P_{n+1}(x)-\pi _{n}\,P_{n}(x)\right) =%
\frac{\Vert P_{n}\Vert _{\mu }^{2}}{P_{n}(c)}K_{n}(x,c),  \label{[S1]-Pc1n}
\end{equation}%
with%
\begin{equation}
\pi _{n}=\pi _{n}(c)=\frac{P_{n+1}(c)}{P_{n}(c)}.  \label{[S1]-param-pi}
\end{equation}%
Notice that $P_{n}^{c,[1]}(c)\neq 0$. We will denote%
\begin{equation*}
||P_{n}^{c,[k]}||_{\mu ,[k]}^{2}=\int_{E}|P_{n}^{c,[k]}(x)|^{2}(x-c)^{k}d\mu
.
\end{equation*}

Next, let us consider the basic Geronimus perturbation of $\mu $ given in (%
\ref{[S1]-GeronimusTr}). Let $\{Q_{n}^{c}\}_{n\geq 0}$\ be the MOPS
associated with $d\nu _{N}(x)$ when the $N=0$. That is, they are orthogonal
with respect to the measure%
\begin{equation}
d\nu _{N=0}(x)=d\nu (x)=\frac{1}{(x-c)}d\mu (x).  \label{[S1]-LinearDiv}
\end{equation}%
This constitutes a linear rational modification of $\mu $, and the
corresponding MOPS $\{Q_{n}^{c}\}_{n\geq 0}$ with respect to%
\begin{equation}
\langle f,g\rangle _{\nu }=\int_{E}f(x)g(x)d\nu (x)=\int_{E}f(x)g(x)\frac{1}{%
(x-c)}d\mu (x)  \label{[S1]-InnProd-3}
\end{equation}%
has been extensively studied in the literature (see, among others, \cite%
{BM-IJMMS96}, \cite[§2.4.2]{Gauts04}, \cite[§2.7]{Ism05}, \cite{U-UCMP69},
and \cite{Z-JCAM97}). It is also well known that $Q_{n}^{c}(x)$ can be
represented as%
\begin{equation}
Q_{n}^{c}(x)=P_{n}(x)-r_{n-1}\,P_{n-1}(x),\quad n=0,1,2,\ldots ,
\label{[S1]-ConnForm-0}
\end{equation}%
where $Q_{0}^{c}(x)=1$,%
\begin{equation}
r_{n-1}=r_{n-1}(c)=\frac{F_{n}(c)}{F_{n-1}(c)},\quad c\notin E,
\label{[S1]-rn(c)}
\end{equation}%
and $F_{-1}(c)=1$. The functions%
\begin{equation*}
F_{n}(s)=\int_{E}\frac{P_{n}(x)}{x-s}d\mu (x),\quad s\in \mathbb{C}%
\,\diagdown \,E,
\end{equation*}%
are the Cauchy integrals of $\{P_{n}\}_{n\geq 0}$, or functions of the
second kind associated with the monic polynomials $\{P_{n}\}_{n\geq 0}$. For
a proper way to compute the above Cauchy integrals, we refer the reader to 
\cite[§ 2.3]{Gauts04}.

It is clear that%
\begin{equation}
K_{n}^{c}(x,y)=\sum\limits_{k=0}^{n}\frac{Q_{k}^{c}(x)Q_{k}^{c}(y)}{%
||Q_{k}^{c}||_{\nu }^{2}}=\frac{%
Q_{n+1}^{c}(x)Q_{n}^{c}(y)-Q_{n+1}^{c}(y)Q_{n}^{c}(x)}{(x-y)}\frac{1}{%
||Q_{n}^{c}||_{\nu }^{2}}  \label{[S1]-Kcn(xy)}
\end{equation}%
are the kernel polynomials corresponding to the MOPS $\{Q_{n}^{c}\}_{n\geq
0} $, which also satisfies the corresponding reproducing property of
polynomial kernels with respect to the measure $d\nu $%
\begin{equation}
\int_{E}f\left( x\right) K_{n}^{c}\left( x,c\right) d\nu (x)=f\left(
c\right) ,  \label{[S1]-RepPropKc}
\end{equation}%
for any polynomial $f\in \mathbb{P}$\ with $\deg f\leq n$. The so called
confluent form of (\ref{[S1]-Kcn(xy)}) is given by (see \cite{Chi78})%
\begin{equation}
K_{n}^{c}(c,c)=\dfrac{[Q_{n+1}^{c}]^{\prime
}(c)Q_{n}^{c}(c)-[Q_{n}^{c}]^{\prime }(c)Q_{n+1}^{c}(c)}{||Q_{n}^{c}||_{\nu
}^{2}},  \label{[S1]-KcnConfl}
\end{equation}%
which is always a positive quantity%
\begin{equation}
K_{n}^{c}(c,c)=\sum_{k=0}^{n}\frac{[Q_{k}^{c}(c)]^{2}}{\left\Vert
Q_{k}^{c}\right\Vert _{\nu }^{2}}>0.  \label{[S1]-PositiveKcn}
\end{equation}%
The key concept to find several of our results is that the polynomials $%
\{P_{n}\}_{n\geq 0}$ are the monic kernel polynomials of parameter $c$ of
the sequence $\{Q_{n}^{c}\}_{n\geq 0}$. According to this argument, the
following expressions%
\begin{equation}
P_{n}(x)=\frac{\Vert Q_{n}^{c}\Vert _{\nu }^{2}}{Q_{n}^{c}(c)}K_{n}^{c}(x,c)=%
\frac{1}{(x-c)}\left( Q_{n+1}^{c}(x)-\frac{Q_{n+1}^{c}(c)}{Q_{n}^{c}(c)}%
Q_{n}^{c}(x)\right)  \label{[S1]-PnmonicKc}
\end{equation}%
hold.

Finally, let $\{{Q_{n}^{c,N}\}}_{n\geq 0}$ be the MOPS associated to $d\nu
_{N}$ when $N>0$. That is, $\{{Q_{n}^{c,N}\}}_{n\geq 0}$ are the Geronimus
perturbed polynomials orthogonal with respect to the the inner product%
\begin{equation}
\langle f,g\rangle _{\nu _{N}}=\int_{E}f(x)g(x)\frac{1}{(x-c)}d\mu
(x)+Nf(c)g(c).  \label{[S1]-InnProd-4}
\end{equation}%
Note that this is a standard inner product in the sense that, for every $%
f,g\in \mathbb{P}$, we have $\langle xf,g\rangle _{\nu _{N}}=\langle
f,xg\rangle _{\nu _{N}}$. From (\ref{[S1]-InnProd-3}) and (\ref%
{[S1]-InnProd-4}), a trivial verification shows that%
\begin{equation}
\langle f,g\rangle _{\nu _{N}}=\langle f,g\rangle _{\nu }+Nf(c)g(c).
\label{[S1]-InnProd-Rel-2}
\end{equation}

Is the aim of this contribution to find and analyze the asymptotic behavior
of the zeros of ${Q_{n}^{c,N}(x)}$ with the parameter $N$, present in the
Geronimus perturbation (\ref{[S1]-GeronimusTr}), and provide as well an
electrostatic model for these zeros when the original measure $\mu $\ is
semiclassical. To this end, we will use some remarkable facts, which are
straightforward consequences of the inner products (\ref{[S1]-InnProd-1}), (%
\ref{[S1]-InnProd-2}), (\ref{[S1]-InnProd-3}) and (\ref{[S1]-InnProd-4}).
Taking into account that the multiplication operator by $(x-c)$ is a
symmetric operator with respect to (\ref{[S1]-InnProd-3}), for any $%
f(x),g(x)\in \mathbb{P}$ we have%
\begin{equation*}
\langle (x-c)f,g\rangle _{\nu }=\langle f,(x-c)g\rangle _{\nu }=\langle
f,g\rangle _{\mu }
\end{equation*}%
If we consider the polynomials $(x-c)f(x)$ or $(x-c)g(x)$ in the above
expression, we deduce%
\begin{equation*}
(x-c)f(x)|_{x=c}=(x-c)g(x)|_{x=c}=0,
\end{equation*}%
which makes it obvious that $(x-c)$ is also a symmetric operator with
respect to the Geronimus inner product (\ref{[S1]-InnProd-4}), i.e.,%
\begin{equation}
\langle (x-c)f,g\rangle _{\nu _{N}}=\langle f,(x-c)g\rangle _{\nu
_{N}}=\langle (x-c)f,g\rangle _{\nu }.  \label{[S1]-InnProd-Rel-3}
\end{equation}%
Finally, another useful consequence of the above relations is%
\begin{equation}
\langle (x-c)f,(x-c)g\rangle _{\nu _{N}}=\langle f,g\rangle _{c,[1]}.
\label{[S1]-InnProd-Rel-4}
\end{equation}


\section{Statement of the main results}

\label{[SECTION-2]-MainRes}



\subsection{Connection formulas}


Next, we provide a new connection formula for the Geronimus perturbed
orthogonal polynomials $Q_{n}^{c,N}(x)$, in terms of the polynomials $%
Q_{n}^{c}(x)$ and the monic Kernel polynomials $P_{n}^{c,[1]}(x)$. This
representation will allow us to obtain the results about monotonicity,
asymptotics, and speed of convergence (presented below in this Section) for
the zeros of $Q_{n}^{c,N}(x)$ in terms of the parameter $N$ present in the
perturbation (\ref{[S1]-GeronimusTr}).


\begin{theorem}[connection formula]
\label{[S1]-THEO-1}The Geronimus perturbed MOPS $\{{\tilde{Q}_{n}^{c,N}\}}%
_{n\geq 0}$\ can be represented as%
\begin{equation}
\tilde{Q}_{n}^{c,N}(x)=Q_{n}^{c}(x)+NB_{n}^{c}(x-c)P_{n-1}^{c,[1]}(x),
\label{[S2]-ConnForm-Main}
\end{equation}%
with ${\tilde{Q}_{n}^{c,N}(x)=\kappa }_{n}{Q_{n}^{c,N}(x)}$, ${\kappa }%
_{n}=1+NB_{n}^{c}$ and%
\begin{equation}
B_{n}^{c}=\frac{-Q_{n}^{c}(c)P_{n-1}(c)}{\Vert P_{n-1}\Vert _{\mu }^{2}}%
=K_{n-1}^{c}(c,c)>0.  \label{[S2]-Bcn}
\end{equation}
\end{theorem}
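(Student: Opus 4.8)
The plan is to verify that the right-hand side of (\ref{[S2]-ConnForm-Main}) is, up to the scalar $\kappa_n$, the monic orthogonal polynomial of degree $n$ for $\langle\cdot,\cdot\rangle_{\nu_N}$, and then to identify the constant $B_n^c$ in two equivalent ways. Introduce the candidate
\[
R_n(x)=Q_n^c(x)+NB_n^c\,(x-c)\,P_{n-1}^{c,[1]}(x),
\]
where $B_n^c$ is for the moment an undetermined constant. Since $Q_n^c$ and $(x-c)P_{n-1}^{c,[1]}$ are both monic of degree $n$, the polynomial $R_n$ has degree $n$ with leading coefficient $\kappa_n=1+NB_n^c$. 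By uniqueness of the MOPS it then suffices to choose $B_n^c$ so that $\langle R_n,p\rangle_{\nu_N}=0$ for every $p\in\mathbb{P}$ with $\deg p\le n-1$; this forces $R_n=\kappa_n Q_n^{c,N}=\tilde{Q}_n^{c,N}$.

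First I would split the test inner product using (\ref{[S1]-InnProd-Rel-2}) and (\ref{[S1]-InnProd-Rel-3}). For the first summand, $\nu$-orthogonality of $Q_n^c$ annihilates the $\nu$-part and leaves $\langle Q_n^c,p\rangle_{\nu_N}=N Q_n^c(c)p(c)$. For the second summand, the symmetry of multiplication by $(x-c)$ in (\ref{[S1]-InnProd-Rel-3}), together with the relation $\langle (x-c)f,g\rangle_\nu=\langle f,g\rangle_\mu$ preceding it, reduces the term to $\langle P_{n-1}^{c,[1]},p\rangle_\mu$; then the kernel representation $P_{n-1}^{c,[1]}=(\|P_{n-1}\|_\mu^2/P_{n-1}(c))\,K_{n-1}(\cdot,c)$ from (\ref{[S1]-Pc1n}) and the reproducing property of $K_{n-1}$ evaluate this to $(\|P_{n-1}\|_\mu^2/P_{n-1}(c))\,p(c)$. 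Collecting terms gives
\[
\langle R_n,p\rangle_{\nu_N}=N\,p(c)\left(Q_n^c(c)+B_n^c\,\frac{\|P_{n-1}\|_\mu^2}{P_{n-1}(c)}\right),
\]
and choosing $p$ with $p(c)\ne 0$ forces $B_n^c=-Q_n^c(c)P_{n-1}(c)/\|P_{n-1}\|_\mu^2$, the first expression in (\ref{[S2]-Bcn}). This establishes (\ref{[S2]-ConnForm-Main}).

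It remains to identify $B_n^c$ with $K_{n-1}^c(c,c)$ and to read off positivity. Taking (\ref{[S1]-PnmonicKc}) at index $n-1$ gives $(x-c)P_{n-1}(x)=Q_n^c(x)-(Q_n^c(c)/Q_{n-1}^c(c))\,Q_{n-1}^c(x)$; pairing this with $P_{n-1}$ in $\langle\cdot,\cdot\rangle_\nu$, using $\langle (x-c)P_{n-1},P_{n-1}\rangle_\nu=\|P_{n-1}\|_\mu^2$ and the $\nu$-orthogonality of the $Q_k^c$, yields the norm identity
\[
\|P_{n-1}\|_\mu^2\,Q_{n-1}^c(c)=-Q_n^c(c)\,\|Q_{n-1}^c\|_\nu^2 .
\]
Evaluating (\ref{[S1]-PnmonicKc}) at $x=c$ gives $P_{n-1}(c)=(\|Q_{n-1}^c\|_\nu^2/Q_{n-1}^c(c))\,K_{n-1}^c(c,c)$. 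Substituting both relations into the first expression for $B_n^c$ collapses the norm factors and leaves $B_n^c=K_{n-1}^c(c,c)$, which is strictly positive by (\ref{[S1]-PositiveKcn}).

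I expect the main obstacle to be the handling of the mixed second summand: one must pass from the Geronimus product $\langle\cdot,\cdot\rangle_{\nu_N}$ through $\langle\cdot,\cdot\rangle_\nu$ to $\langle\cdot,\cdot\rangle_\mu$ via the symmetry relations, and then recognize the Christoffel-kernel structure of $P_{n-1}^{c,[1]}$ in order to invoke the reproducing property. The subsequent norm bookkeeping required to match $-Q_n^c(c)P_{n-1}(c)/\|P_{n-1}\|_\mu^2$ with $K_{n-1}^c(c,c)$ is the other delicate point, since it hinges on correctly interpreting $\{P_n\}$ as the kernel polynomials of $\{Q_n^c\}$ through (\ref{[S1]-PnmonicKc}).
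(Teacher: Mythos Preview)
Your argument is correct and shares the paper's overall strategy: define the right-hand side, verify its $\nu_N$-orthogonality to lower-degree polynomials, and then identify $B_n^c$. The orthogonality check is essentially the paper's, though you test against an arbitrary $p$ in one stroke via the reproducing property of $K_{n-1}$, whereas the paper checks the basis $\{1,(x-c),\ldots,(x-c)^{n-1}\}$ element by element; your version is tidier but not materially different.

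Where you genuinely diverge is in the identification $B_n^c=K_{n-1}^c(c,c)$. The paper first establishes the auxiliary representation $Q_n^{c,N}(x)=Q_n^c(x)-NQ_n^{c,N}(c)K_{n-1}^c(x,c)$ (Lemma~\ref{[S3]-LEMA-1}), and then computes $\langle 1,(x-c)P_{n-1}^{c,[1]}\rangle_\nu$ a second time through a longer manipulation with the $Q^c$-kernels, combining both to reach $K_{n-1}^c(c,c)$. You bypass this lemma entirely: pairing $(x-c)P_{n-1}=Q_n^c-(Q_n^c(c)/Q_{n-1}^c(c))Q_{n-1}^c$ with $P_{n-1}$ in $\langle\cdot,\cdot\rangle_\nu$ gives the norm identity $\|P_{n-1}\|_\mu^2\,Q_{n-1}^c(c)=-Q_n^c(c)\,\|Q_{n-1}^c\|_\nu^2$, and evaluating $P_{n-1}(c)$ via (\ref{[S1]-PnmonicKc}) then collapses the first expression for $B_n^c$ directly to $K_{n-1}^c(c,c)$. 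This is shorter and avoids the preliminary lemma, at the cost of being slightly less transparent about where the kernel $K_{n-1}^c$ enters the picture; the paper's detour through Lemma~\ref{[S3]-LEMA-1} makes that connection explicit from the outset.
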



Observe that one can even give another alternative expression for $B_{n}^{c}$%
, which only involves polynomials and functions of the second kind relative
to the original measure $\mu $, evaluated at the point of tranformation $c$.
Combining (\ref{[S1]-ConnForm-0}) with (\ref{[S2]-Bcn}), we deduce that%
\begin{equation}
B_{n}^{c}=K_{n-1}^{c}(c,c)=\frac{r_{n-1}\,P_{n-1}^{2}-P_{n}(c)P_{n-1}(c)}{%
\Vert P_{n-1}\Vert _{\mu }^{2}}.  \label{[S2]-Bcn-3}
\end{equation}

As a direct consequence of the above theorem, we can express ${Q_{n}^{c,N}(x)%
}$ in terms of only two consecutive elements of the initial sequence $%
\{P_{n}\}_{n\geq 0}$. This expression of ${Q_{n}^{c,N}(x)}$\ was already
studied in the literature (see \cite[formula (1.4)]{M-PMH90} and \cite[Sec. 1%
]{DM-NA13}). In fact, the original aim of Geronimus in its pioneer works on
the subject was to find necessary and sufficient conditions for the
existence of a sequence of coefficients $\Lambda _{n}$, such that the linear
combination of monic polinomials%
\begin{equation*}
P_{n}(x)+\Lambda _{n}P_{n-1}(x),\quad \Lambda _{n}\neq 0,\,n=1,2,\ldots ,
\end{equation*}%
were, in turn, orthogonal with respect to some measure supported on $\mathbb{%
R}$. Here we rewrite the value of $\Lambda _{n}$ in several new equivalent
ways. Substituting (\ref{[S1]-ConnForm-0}) and (\ref{[S1]-Pc1n}) into (\ref%
{[S2]-ConnForm-Main}) yields%
\begin{equation*}
\tilde{Q}_{n}^{c,N}(x)={\kappa }%
_{n}Q_{n}^{c,N}(x)=P_{n}(x)-r_{n-1}P_{n-1}(x)+NB_{n}^{c}\left( P_{n}(x)-\pi
_{n-1}P_{n-1}(x)\right) .
\end{equation*}%
Thus, having in mind that ${\kappa }_{n}=1+NB_{n}^{c}$, after some trivial
computations we can state the following result.


\begin{proposition}
\label{[S1]-PROP-1}The monic Geronimus perturbed orthogonal polynomials of
the sequence $\{{Q_{n}^{c,N}}\}_{n\geq 0}$ can be represented as%
\begin{equation}
{Q_{n}^{c,N}(x)}=P_{n}(x)+\Lambda _{n}^{c}\,P_{n-1}(x),
\label{[S2]-ConnForm-1}
\end{equation}%
with%
\begin{equation}
\Lambda _{n}^{c}=\Lambda _{n}^{c}(N)=\frac{\pi _{n-1}-r_{n-1}}{1+NB_{n}^{c}}%
-\pi _{n-1},  \label{[S2]-LambdacN}
\end{equation}%
and $\pi _{n-1}$, $r_{n-1}$ given respectively in (\ref{[S1]-param-pi}) and (%
\ref{[S1]-rn(c)}) respectively. Notice that $\Lambda _{n}^{c}$ is
independent of the variable $x$.
\end{proposition}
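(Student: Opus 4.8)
The plan is to derive Proposition \ref{[S1]-PROP-1} directly from Theorem \ref{[S1]-THEO-1}, treating it as a purely algebraic corollary. The connection formula (\ref{[S2]-ConnForm-Main}) expresses $\tilde{Q}_{n}^{c,N}$ in terms of $Q_{n}^{c}$ and the Kernel polynomial $P_{n-1}^{c,[1]}$; my goal is to rewrite both of these building blocks in terms of the two consecutive original polynomials $P_{n}$ and $P_{n-1}$, and then collect coefficients. The two substitutions I need are already available in the excerpt: formula (\ref{[S1]-ConnForm-0}), which gives $Q_{n}^{c}(x)=P_{n}(x)-r_{n-1}P_{n-1}(x)$, and formula (\ref{[S1]-Pc1n}), which gives $(x-c)P_{n-1}^{c,[1]}(x)=P_{n}(x)-\pi_{n-1}P_{n}(x)$ after shifting the index $n\mapsto n-1$, so that $(x-c)P_{n-1}^{c,[1]}(x)=P_{n}(x)-\pi_{n-1}P_{n-1}(x)$.

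First I would substitute these two expressions into (\ref{[S2]-ConnForm-Main}) to obtain
\begin{equation*}
\tilde{Q}_{n}^{c,N}(x)=P_{n}(x)-r_{n-1}P_{n-1}(x)+NB_{n}^{c}\bigl(P_{n}(x)-\pi_{n-1}P_{n-1}(x)\bigr),
\end{equation*}
which is precisely the displayed identity given just before the statement. Next I would group the coefficients of $P_{n}(x)$ and of $P_{n-1}(x)$ separately, giving a factor $(1+NB_{n}^{c})$ in front of $P_{n}(x)$ and a coefficient $-(r_{n-1}+NB_{n}^{c}\pi_{n-1})$ in front of $P_{n-1}(x)$. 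Since $\kappa_{n}=1+NB_{n}^{c}$ by definition, the leading coefficient factors out cleanly, and dividing by $\kappa_{n}$ (which is nonzero because $B_{n}^{c}>0$ and $N\geq 0$ by (\ref{[S2]-Bcn})) recovers the monic polynomial $Q_{n}^{c,N}(x)=\tilde{Q}_{n}^{c,N}(x)/\kappa_{n}$. This immediately yields the form (\ref{[S2]-ConnForm-1}) with $P_{n}(x)$ having coefficient $1$, confirming monicity.

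The only remaining task is to verify that the resulting coefficient of $P_{n-1}(x)$ agrees with the stated value of $\Lambda_{n}^{c}$ in (\ref{[S2]-LambdacN}). Dividing the grouped coefficient gives
\begin{equation*}
\Lambda_{n}^{c}=\frac{-(r_{n-1}+NB_{n}^{c}\pi_{n-1})}{1+NB_{n}^{c}},
\end{equation*}
so I would check algebraically that this equals $\dfrac{\pi_{n-1}-r_{n-1}}{1+NB_{n}^{c}}-\pi_{n-1}$. Writing the latter over the common denominator $1+NB_{n}^{c}$ produces $\dfrac{\pi_{n-1}-r_{n-1}-\pi_{n-1}(1+NB_{n}^{c})}{1+NB_{n}^{c}}=\dfrac{-r_{n-1}-NB_{n}^{c}\pi_{n-1}}{1+NB_{n}^{c}}$, which matches. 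Finally, since $\pi_{n-1}$, $r_{n-1}$, $N$, and $B_{n}^{c}$ are all constants independent of $x$, so is $\Lambda_{n}^{c}$, completing the proof. There is no genuine obstacle here: the entire argument is a two-line substitution followed by routine regrouping and a one-line verification that two algebraically equivalent expressions for $\Lambda_{n}^{c}$ coincide, which is exactly why the authors flag it as following after \emph{some trivial computations}.
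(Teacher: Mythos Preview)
Your proposal is correct and follows exactly the paper's approach: substitute (\ref{[S1]-ConnForm-0}) and (\ref{[S1]-Pc1n}) into (\ref{[S2]-ConnForm-Main}), factor out $\kappa_{n}=1+NB_{n}^{c}$, and simplify the resulting coefficient of $P_{n-1}(x)$. The paper itself only sketches this as ``trivial computations,'' so your explicit regrouping and verification of the two forms of $\Lambda_{n}^{c}$ simply fill in those details.
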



\begin{remark}
\label{[S2]-REMK-1}The coefficient $\Lambda _{n}^{c}(N)$ can also be
expressed\ only in terms of quantities relative to the original
non-perturbed measure $\mu $, the point of transformation $c$ and the mass $%
N $. Thus, from (\ref{[S2]-Bcn-3}) and (\ref{[S2]-LambdacN}), we obtain%
\begin{equation*}
\Lambda _{n}^{c}(N)=\left( \frac{1}{\pi _{n-1}-r_{n-1}}-N\frac{P_{n-1}^{2}(c)%
}{\Vert P_{n-1}\Vert _{\mu }^{2}}\right) ^{-1}-\pi _{n-1}.
\end{equation*}%
Also, observe that for $N=0$, the coefficient $\Lambda _{n}^{c}(0)$ reduces
to $-r_{n-1}$, and we recover the connection formula (\ref{[S1]-ConnForm-0}).
\end{remark}


\subsection{Asymptotic behavior and sharp limits of the zeros}


Let $x_{n,s}$, $x_{n,s}^{c,[k]}$, $y_{n,s}^{c}$, and $y_{n,s}^{c,N}$, $%
s=1,\ldots ,n$ be the zeros of $P_{n}(x)$, $P_{n}^{c,[k]}(x)$, ${Q_{n}^{c}(x)%
}$, and ${Q_{n}^{c,N}(x)}$, respectively, all arranged in an increasing
order, and assume that $C_{0}(E)=[a,b]$. Next, we analyze the behavior of
zeros $y_{n,s}^{c,N}$ as a function of the mass $N$ in (\ref%
{[S1]-GeronimusTr}). We obtain such a behavior when $N$ tends from zero to
infinity as well as we characterize the exact values of $N$ such the
smallest (respectively, the largest) zero of $\{{Q_{n}^{c,N}}\}_{n\geq 0}$
is located outside of $E=\mathrm{supp}(\mu )$.

In order to do that, we use a technique developed in \cite[Lemma 1]%
{BDR-JCAM02} and \cite[Lemmas 1 and 2]{DMR-ANM10} concerning the behavior
and the asymptotics of the zeros of linear combinations of two polynomials $%
h,g\in \mathbb{P}$ with interlacing zeros, such that $%
f(x)=h_{n}(x)+cg_{n}(x) $. From now on, we will refer to this technique as
the \textit{Interlacing Lemma}, and for the convenience of the reader we
include its statement in the final Appendix.

Taking into account that the positive constant $B_{n}^{c}$ does not depend
on $N$, we can use the connection formula (\ref{[S2]-ConnForm-Main}) to
obtain results about monotonicity, asymptotics, and speed of convergence for
the zeros of $Q_{n}^{c,N}(x)$ in terms of the mass $N$. Indeed, let assume
that $y_{n,k}^{c,N}$, $k=1,2,...,n,$ are the zeros of $Q_{n}^{c,N}(x)$.
Thus, from (\ref{[S2]-ConnForm-Main}), the positivity of $B_{n}^{c}$, and
Theorem \ref{[S3]-LEMA-2}, we are in the hypothesis of the Interlacing
Lemma, and we immediately conclude the following results.


\begin{theorem}
\label{[S2]-THEO-2} If $C_{0}(E)=[a,b]$ and $c<a$, then%
\begin{equation*}
c<y_{n,1}^{c,N}<y_{n,1}^{c}<x_{n-1,1}^{c,[1]}<y_{n,2}^{c,N}<y_{n,2}^{c}<%
\cdots <x_{n-1,n-1}^{c,[1]}<y_{n,n}^{c,N}<y_{n,n}^{c}.
\end{equation*}%
Moreover, each $y_{n,k}^{c,N}$ is a decreasing function of $N$ and, for each 
$k=1,\ldots ,n-1$,%
\begin{equation*}
\lim_{N\rightarrow \infty }y_{n,1}^{c,N}=c,\quad \lim_{N\rightarrow \infty
}y_{n,k+1}^{c,N}=x_{n-1,k}^{c,[1]}\,,
\end{equation*}%
as well as%
\begin{equation*}
\begin{array}{l}
\lim\limits_{N\rightarrow \infty }N[y_{n,1}^{c,N}-c]=\dfrac{-Q_{n}^{c}(c)}{%
B_{n}^{c}P_{n-1}^{c,[1]}(c)}, \\ 
\lim\limits_{N\rightarrow \infty }N[y_{n,k+1}^{c,N}-x_{n-1,k}^{c,[1]}]=%
\dfrac{-Q_{n}^{c}(x_{n-1,k}^{c,[1]})}{%
B_{n}^{c}(x_{n-1,k}^{c,[1]}-c)[P_{n-1}^{c,[1]}]^{\prime }(x_{n-1,k}^{c,[1]})}%
.%
\end{array}%
\end{equation*}
\end{theorem}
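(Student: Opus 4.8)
The plan is to invoke the Interlacing Lemma (Theorem~\ref{[S3]-LEMA-2}, stated in the Appendix) applied to the connection formula~(\ref{[S2]-ConnForm-Main}), which writes $\tilde{Q}_{n}^{c,N}(x)$ as a linear combination $Q_{n}^{c}(x)+N\,B_{n}^{c}\,(x-c)P_{n-1}^{c,[1]}(x)$ with the positive coefficient $NB_{n}^{c}$. The first step is to verify the interlacing hypothesis for the two building-block polynomials, namely $Q_{n}^{c}(x)$ and $h_{n}(x):=(x-c)P_{n-1}^{c,[1]}(x)$. The zeros of $h_{n}$ are the point $c$ together with the $n-1$ zeros $x_{n-1,k}^{c,[1]}$ of the Christoffel-perturbed polynomial $P_{n-1}^{c,[1]}$, all of which lie in $(a,b)$. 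The zeros $y_{n,k}^{c}$ of $Q_{n}^{c}$ also lie in $(a,b)$. Since $c<a$ lies to the left of both supports, one must show that $c<y_{n,1}^{c}<x_{n-1,1}^{c,[1]}<y_{n,2}^{c}<\cdots<x_{n-1,n-1}^{c,[1]}<y_{n,n}^{c}$, i.e.\ that the $n$ zeros of $Q_{n}^{c}$ and the $n$ zeros of $h_{n}$ (counting $c$) strictly interlace. This is exactly the classical interlacing between a kernel polynomial and the orthogonal polynomial of one lower degree: recall from~(\ref{[S1]-Pc1n}) that $P_{n-1}^{c,[1]}$ is (a multiple of) the kernel $K_{n-1}^{c}(\cdot,c)$ associated to the sequence $\{Q_{n}^{c}\}$ with shift $c$, so standard kernel-polynomial theory gives the interlacing of the zeros of $P_{n-1}^{c,[1]}$ with those of $Q_{n}^{c}$, and the extremal position of $c<a$ closes up the ordering at the left end.

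Once the interlacing is established, the monotonicity and the limits follow mechanically from the Interlacing Lemma. The sign bookkeeping is what fixes the \emph{direction} of motion: because $B_{n}^{c}>0$ by~(\ref{[S2]-Bcn}) and the perturbation parameter is $NB_{n}^{c}\geq 0$ increasing in $N$, the lemma forces each zero $y_{n,k}^{c,N}$ of the combination to move monotonically, and the orientation (decreasing in $N$) is read off from the relative ordering of the two zero sets together with the sign of the leading behaviour; concretely, as $N$ grows the zeros are pulled from the $y_{n,k}^{c}$ toward the zeros of $(x-c)P_{n-1}^{c,[1]}(x)$. Thus $y_{n,1}^{c,N}\to c$ and $y_{n,k+1}^{c,N}\to x_{n-1,k}^{c,[1]}$ as $N\to\infty$, with each $y_{n,k}^{c,N}$ trapped in the interval between consecutive members of the interlaced list, giving the full chain of strict inequalities in the statement.

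For the speed-of-convergence asymptotics I would extract the first-order term directly. Writing $\tilde{Q}_{n}^{c,N}(y_{n,k}^{c,N})=0$ and dividing by $NB_{n}^{c}$ gives
\begin{equation*}
\frac{Q_{n}^{c}(y_{n,k}^{c,N})}{NB_{n}^{c}}+(y_{n,k}^{c,N}-c)P_{n-1}^{c,[1]}(y_{n,k}^{c,N})=0.
\end{equation*}
Letting $N\to\infty$ and using the already-proven limits, one performs a Taylor expansion of the second term about the limiting zero. For $k\geq1$ the limit is $x_{n-1,k}^{c,[1]}$, a simple zero of $P_{n-1}^{c,[1]}$, so $P_{n-1}^{c,[1]}(y_{n,k+1}^{c,N})\approx (y_{n,k+1}^{c,N}-x_{n-1,k}^{c,[1]})\,[P_{n-1}^{c,[1]}]^{\prime}(x_{n-1,k}^{c,[1]})$; substituting and solving for the leading coefficient of $N[y_{n,k+1}^{c,N}-x_{n-1,k}^{c,[1]}]$ yields the stated value $-Q_{n}^{c}(x_{n-1,k}^{c,[1]})/\bigl(B_{n}^{c}(x_{n-1,k}^{c,[1]}-c)[P_{n-1}^{c,[1]}]^{\prime}(x_{n-1,k}^{c,[1]})\bigr)$. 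The case $k=1$ is analogous but uses that $c$ is a simple zero of the factor $(x-c)$, so the relevant derivative contribution is just $P_{n-1}^{c,[1]}(c)$, giving $-Q_{n}^{c}(c)/\bigl(B_{n}^{c}P_{n-1}^{c,[1]}(c)\bigr)$.

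The main obstacle I anticipate is the rigorous verification of the strict interlacing between the zeros of $Q_{n}^{c}$ and those of $P_{n-1}^{c,[1]}$ together with the correct placement of the extra node at $c$; everything downstream is a clean application of the Interlacing Lemma and elementary Taylor estimates. The cleanest route is to exploit the kernel representation~(\ref{[S1]-Pc1n}) identifying $P_{n-1}^{c,[1]}$ with the kernel polynomial of $\{Q_{n}^{c}\}$, so that the desired interlacing is the well-known separation property of kernel polynomials, and then to use $c<a$ to guarantee that $c$ sits strictly to the left of every zero in $(a,b)$, which both supplies the leftmost node and fixes the sign pattern needed for the monotonicity conclusion.
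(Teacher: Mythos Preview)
Your overall architecture matches the paper exactly: apply the Interlacing Lemma (Lemma~\ref{InterlacingLemma}) to the connection formula~(\ref{[S2]-ConnForm-Main}), once the interlacing of the zeros of $Q_{n}^{c}$ and $(x-c)P_{n-1}^{c,[1]}$ has been checked; the chain of inequalities, the monotonicity in $N$, and both rate limits then drop out verbatim from the lemma (your Taylor-expansion paragraph simply rederives the rate statement already contained in Lemma~\ref{InterlacingLemma}).

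The gap is in the verification of the interlacing hypothesis. You claim, citing~(\ref{[S1]-Pc1n}), that $P_{n-1}^{c,[1]}$ is a multiple of the kernel $K_{n-1}^{c}(\cdot,c)$ attached to the sequence $\{Q_{n}^{c}\}$. That is not what~(\ref{[S1]-Pc1n}) says: it identifies $P_{n-1}^{c,[1]}$ with the kernel $K_{n-1}(\cdot,c)$ of the \emph{original} sequence $\{P_{n}\}$. The kernel polynomial of $\{Q_{n}^{c}\}$ at $c$ is $P_{n-1}$ itself, by~(\ref{[S1]-PnmonicKc}). In terms of measures, $\{Q_{n}^{c}\}$ is orthogonal with respect to $d\nu=(x-c)^{-1}d\mu$, while $\{P_{n-1}^{c,[1]}\}$ is orthogonal with respect to $(x-c)\,d\mu=(x-c)^{2}d\nu$; so $P_{n-1}^{c,[1]}$ is the \emph{two-step} Christoffel transform of $\{Q_{n}^{c}\}$, not the one-step kernel. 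Standard kernel-polynomial separation therefore gives you interlacing of the zeros of $Q_{n}^{c}$ with those of $P_{n-1}$, and of $P_{n}$ with those of $P_{n-1}^{c,[1]}$, but neither of these is the relation you need, and interlacing is not transitive in a way that closes the gap.

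The paper handles exactly this point in Lemma~\ref{[S3]-LEMA-2}: it combines~(\ref{[S1]-PnmonicKc}) and~(\ref{[S1]-Pc1n}) to write $(x-c)^{2}P_{n-1}^{c,[1]}$ as a short linear combination of $Q_{n+1}^{c}$, $Q_{n}^{c}$, $Q_{n-1}^{c}$, reduces via the recurrence~(\ref{[S3]-TTRRQcn}) to a combination of $Q_{n}^{c}$ and $Q_{n-1}^{c}$ with a provably positive coefficient on $Q_{n-1}^{c}$, and then reads off the sign of $P_{n-1}^{c,[1]}$ at the zeros $y_{n,k}^{c}$ from the sign of $Q_{n-1}^{c}(y_{n,k}^{c})$. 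That sign argument, together with $c<a$, supplies precisely the ordering $c<y_{n,1}^{c}<x_{n-1,1}^{c,[1]}<\cdots<x_{n-1,n-1}^{c,[1]}<y_{n,n}^{c}$ that your proposal assumes. Once you replace your kernel-identification shortcut by this computation, the rest of your argument goes through.
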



\begin{theorem}
\label{[S2]-THEO-3} If $C_{0}(E)=[a,b]$ and $c>b$, then%
\begin{equation*}
y_{n,1}^{c}<y_{n,1}^{c,N}<x_{n-1,1}^{c,[1]}<\cdots
<y_{n,n-1}^{c}<y_{n,n-1}^{c,N}<x_{n-1,n-1}^{c,[1]}<y_{n,n}^{c}<y_{n,n}^{c,N}<c.
\end{equation*}%
Moreover, each $y_{n,k}^{c,N}$ is an increasing function of $N$ and, for
each $k=1,\ldots ,n-1$,%
\begin{equation*}
\lim_{N\rightarrow \infty }y_{n,n}^{c,N}=c,\quad \lim_{N\rightarrow \infty
}y_{n,k}^{c,N}=x_{n-1,k}^{c,[1]},
\end{equation*}%
and 
\begin{equation*}
\begin{array}{l}
\lim\limits_{N\rightarrow \infty }N[c-y_{n,n}^{c,N}]=\dfrac{Q_{n}^{c}(c)}{%
B_{n}^{c}P_{n-1}^{c,[1]}(c)}, \\ 
\lim\limits_{N\rightarrow \infty }N[x_{n-1,k}^{c,[1]}-y_{n,k}^{c,N}]=\dfrac{%
Q_{n}^{c}(x_{n-1,k}^{c,[1]})}{%
B_{n}^{c}(x_{n-1,k}^{c,[1]}-c)[P_{n-1}^{c,[1]}]^{\prime }(x_{n-1,k}^{c,[1]})}%
.%
\end{array}%
\end{equation*}
\end{theorem}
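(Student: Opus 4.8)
The plan is to read Theorem~\ref{[S2]-THEO-3} as a direct application of the Interlacing Lemma to the connection formula (\ref{[S2]-ConnForm-Main}). Since $\tilde{Q}_n^{c,N}=\kappa_n Q_n^{c,N}$ with $\kappa_n=1+NB_n^c>0$, the polynomials $\tilde Q_n^{c,N}$ and $Q_n^{c,N}$ share their zeros, so I would work with $\tilde Q_n^{c,N}$ and regard (\ref{[S2]-ConnForm-Main}) as a one-parameter family $\tilde Q_n^{c,N}=h_n+\lambda\,g_n$, where $h_n=Q_n^c$, $g_n(x)=(x-c)P_{n-1}^{c,[1]}(x)$, and $\lambda=NB_n^c$. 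Both $h_n$ and $g_n$ are monic of degree $n$, and by Theorem~\ref{[S1]-THEO-1} the multiplier $\lambda=NB_n^c$ is strictly positive for every $N>0$. The zeros of $g_n$ are the $n-1$ zeros of $P_{n-1}^{c,[1]}$ together with the point $c$; since $P_{n-1}^{c,[1]}$ is the monic OPS of the sign-definite functional $\langle\cdot,\cdot\rangle_{\mu,[1]}$, its zeros lie in $(a,b)$, and because $c>b$ the point $c$ is the largest zero of $g_n$.

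The crux is the strict interlacing of the zeros of $h_n$ and $g_n$, which is the role of Theorem~\ref{[S3]-LEMA-2}. I would prove it by noting that both polynomials belong to the pencil $\Phi_t(x)=P_n(x)-t\,P_{n-1}(x)$: by (\ref{[S1]-Pc1n}) one has $g_n=\Phi_{\pi_{n-1}}$, and by (\ref{[S1]-ConnForm-0}) one has $h_n=\Phi_{r_{n-1}}$, with $\pi_{n-1},r_{n-1}$ from (\ref{[S1]-param-pi}) and (\ref{[S1]-rn(c)}). The zeros of $\Phi_t$ are the solutions of $P_n(x)/P_{n-1}(x)=t$; since $P_n/P_{n-1}$ is strictly increasing on each interval cut out by the zeros of $P_{n-1}$ (its partial-fraction expansion has negative residues there), each such interval carries exactly one zero of $\Phi_t$, and that zero is a strictly increasing function of $t$. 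Evaluating (\ref{[S1]-ConnForm-0}) at $x=c$ yields the identity $\pi_{n-1}-r_{n-1}=Q_n^c(c)/P_{n-1}(c)$, whose right-hand side is positive for $c>b$: indeed $Q_n^c$, being the monic OPS of a sign-definite functional, has all its zeros in $(a,b)$, so $Q_n^c(c)>0$ and $P_{n-1}(c)>0$. Hence $r_{n-1}<\pi_{n-1}$, the zeros of $h_n$ lie to the left of their partners among the zeros of $g_n$, and combined with $c$ being the rightmost zero of $g_n$ this gives
\[
y_{n,1}^c<x_{n-1,1}^{c,[1]}<y_{n,2}^c<\cdots<x_{n-1,n-1}^{c,[1]}<y_{n,n}^c<c .
\]
The delicate point here, and the main obstacle, is exactly this sign bookkeeping for $c>b$, where $\langle\cdot,\cdot\rangle_\nu$ is sign-definite: one must carefully locate all zeros of $h_n$ and $g_n$ in $(a,b)$ and fix the orientation $r_{n-1}<\pi_{n-1}$.

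With the interlacing established and $\lambda>0$, the Interlacing Lemma of the Appendix applies directly. Because in each pair the zero $y_{n,k}^c$ of $h_n$ sits immediately to the left of its partner zero of $g_n$, the lemma confines each zero $y_{n,k}^{c,N}$ of $\tilde Q_n^{c,N}$ strictly between them, producing the full chain of the statement and, in particular, $y_{n,n}^{c,N}<c$. The same lemma gives that each $y_{n,k}^{c,N}$ is a strictly increasing function of $\lambda=NB_n^c$, hence of $N$, and that as $\lambda\to\infty$ the zeros of $h_n+\lambda g_n$ tend to the zeros of $g_n$, so that $\lim_{N\to\infty}y_{n,k}^{c,N}=x_{n-1,k}^{c,[1]}$ for $k\le n-1$ and $\lim_{N\to\infty}y_{n,n}^{c,N}=c$.

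It remains to compute the speed of convergence. Setting $x=y_{n,k}^{c,N}$ in $0=h_n(x)+\lambda g_n(x)$, expanding $g_n$ to first order about its simple zero $\eta_k$ (the corresponding limit point), and using $g_n(\eta_k)=0$, I obtain $NB_n^c\,g_n'(\eta_k)\,(y_{n,k}^{c,N}-\eta_k)=-Q_n^c(\eta_k)+o(1)$, whence $\lim_{N\to\infty}N\,(y_{n,k}^{c,N}-\eta_k)=-Q_n^c(\eta_k)/\big(B_n^c\,g_n'(\eta_k)\big)$. From $g_n(x)=(x-c)P_{n-1}^{c,[1]}(x)$ one has $g_n'(x_{n-1,k}^{c,[1]})=(x_{n-1,k}^{c,[1]}-c)\,[P_{n-1}^{c,[1]}]'(x_{n-1,k}^{c,[1]})$ at the interior limit points and $g_n'(c)=P_{n-1}^{c,[1]}(c)$ at $\eta_n=c$; substituting these, and recording that each zero approaches its limit from below (so the increments $x_{n-1,k}^{c,[1]}-y_{n,k}^{c,N}$ and $c-y_{n,n}^{c,N}$ are positive), reproduces exactly the two displayed limit formulas. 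Everything past the interlacing step is a routine use of the lemma together with a first-order Taylor expansion; the only genuine difficulty is the sign analysis underlying the interlacing.
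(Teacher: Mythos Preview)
Your proposal is correct and, at the level of the overall strategy, matches the paper: write $\tilde Q_n^{c,N}=Q_n^c+NB_n^c\,(x-c)P_{n-1}^{c,[1]}$ with $B_n^c>0$, verify the strict interlacing of the zeros of $Q_n^c$ and $(x-c)P_{n-1}^{c,[1]}$, and then read off the chain of inequalities, the monotonicity in $N$, the limits, and the rates directly from the Interlacing Lemma of the Appendix. The final computation of the rates via $g_n'(c)=P_{n-1}^{c,[1]}(c)$ and $g_n'(x_{n-1,k}^{c,[1]})=(x_{n-1,k}^{c,[1]}-c)[P_{n-1}^{c,[1]}]'(x_{n-1,k}^{c,[1]})$ is the same in both treatments.

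Where you depart from the paper is in the proof of the interlacing itself. The paper (Lemma~\ref{[S3]-LEMA-2}) combines (\ref{[S1]-PnmonicKc}) and (\ref{[S1]-Pc1n}) with the three--term recurrence of $\{Q_n^c\}$ to obtain
\[
(x-c)^2P_{n-1}^{c,[1]}(x)=\bigl(x-\beta_{n+1}^{c}-d_{n-1}^{c}\bigr)Q_{n}^{c}(x)+\bigl(e_{n-1}^{c}-\gamma_{n+1}^{c}\bigr)Q_{n-1}^{c}(x),
\]
shows via kernel identities that $e_{n-1}^{c}-\gamma_{n+1}^{c}>0$, and then evaluates at the zeros $y_{n,k}^{c}$ to get $\mathrm{sign}\,P_{n-1}^{c,[1]}(y_{n,k}^{c})=\mathrm{sign}\,Q_{n-1}^{c}(y_{n,k}^{c})$, reducing to the classical interlacing of $Q_n^c$ and $Q_{n-1}^c$. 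You instead observe that both $Q_n^c=P_n-r_{n-1}P_{n-1}$ and $(x-c)P_{n-1}^{c,[1]}=P_n-\pi_{n-1}P_{n-1}$ lie in the one--parameter pencil $\Phi_t=P_n-tP_{n-1}$, use the strict monotonicity of $P_n/P_{n-1}$ on each interval between consecutive zeros of $P_{n-1}$ (so that every zero of $\Phi_t$ is a strictly increasing function of $t$ confined to its interval), and check $r_{n-1}<\pi_{n-1}$ through the identity $\pi_{n-1}-r_{n-1}=Q_n^c(c)/P_{n-1}(c)>0$ for $c>b$. Your argument is shorter and makes transparent exactly where the hypothesis $c>b$ is used (in that single sign); the paper's argument, by contrast, gives the interlacing of $P_{n-1}^{c,[1]}$ with $Q_n^c$ uniformly in the position of $c$, which is convenient for handling Theorems~\ref{[S2]-THEO-2} and~\ref{[S2]-THEO-3} with a single lemma.
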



Notice that the mass point $c$ attracts one zero of $Q_{n}^{c,N}(x)$, i.e.
when $N\rightarrow \infty $, it captures either the smallest or the largest
zero, according to the location of the point $c$ with respect to the support
of the measure $\mu $. When either $c<a$ or $c>b$, at most one of the zeros
of $Q_{n}^{c,N}(x)$ is located outside of $[a,b]$. Next, give explicitly the
value $N_{0}$ of the mass $N$, such that for $N>N_{0}$ one of the zeros is
located outside $[a,b]$.


\begin{corollary}[minimum mass]
If $C_{0}(E)=[a,b]$ and $c\notin \lbrack a,b]$, the following expressions
hold.

\begin{itemize}
\item[$(a)$] If $c<a$, then the smallest zero $y_{n,1}^{c,N}$ satisfies%
\begin{equation*}
\begin{array}{c}
y_{n,1}^{c,N}>a,\quad \mathrm{for}\quad N<N_{0}, \\ 
y_{n,1}^{c,N}=a,\quad \mathrm{for}\quad N=N_{0}, \\ 
y_{n,1}^{c,N}<a,\quad \mathrm{for}\quad N>N_{0},%
\end{array}%
\end{equation*}%
where%
\begin{equation*}
N_{0}=N_{0}(n,c,a)=\frac{-Q_{n}^{c}(a)}{K_{n-1}^{c}\left( c,c\right)
(a-c)P_{n-1}^{c,[1]}(a)}>0.
\end{equation*}

\item[$(b)$] If $c>b$, then the largest zero $y_{n,n}^{c,N}$ satisfies%
\begin{equation*}
\begin{array}{c}
y_{n,n}^{c,N}<b,\quad \mathrm{for}\quad N<N_{0}, \\ 
y_{n,n}^{c,N}=b,\quad \mathrm{for}\quad N=N_{0}, \\ 
y_{n,n}^{c,N}>b,\quad \mathrm{for}\quad N>N_{0},%
\end{array}%
\end{equation*}%
where%
\begin{equation*}
N_{0}=N_{0}(n,c,b)=\frac{-Q_{n}^{c}(b)}{K_{n-1}^{c}\left( c,c\right)
(b-c)P_{n-1}^{c,[1]}(b)}>0.
\end{equation*}
\end{itemize}
\end{corollary}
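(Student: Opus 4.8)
The plan is to prove case $(a)$ (with $c<a$) in detail and then obtain $(b)$ by the symmetric argument. By Theorem \ref{[S2]-THEO-2}, the smallest zero $y_{n,1}^{c,N}$ is a continuous, strictly decreasing function of $N$ on $[0,\infty)$, with $y_{n,1}^{c,0}=y_{n,1}^{c}$ and $\lim_{N\to\infty}y_{n,1}^{c,N}=c$. Since $d\nu$ is a positive measure supported on $E\subseteq[a,b]$, its orthogonal polynomials $Q_{n}^{c}$ have all their zeros in the interior of the convex hull $[a,b]$, so $y_{n,1}^{c}>a>c$. Continuity together with strict monotonicity then forces a unique $N_{0}\in(0,\infty)$ with $y_{n,1}^{c,N_{0}}=a$, and the stated trichotomy ($y_{n,1}^{c,N}>a,\ =a,\ <a$ according as $N<N_{0},\ =N_{0},\ >N_{0}$) is immediate.

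To obtain the explicit value I would use the connection formula. Because $\kappa_{n}=1+NB_{n}^{c}\neq0$, the zeros of $Q_{n}^{c,N}$ coincide with those of $\tilde{Q}_{n}^{c,N}=\kappa_{n}Q_{n}^{c,N}$. Evaluating (\ref{[S2]-ConnForm-Main}) at $x=a$ and imposing $\tilde{Q}_{n}^{c,N_{0}}(a)=0$ gives
\begin{equation*}
Q_{n}^{c}(a)+N_{0}B_{n}^{c}(a-c)P_{n-1}^{c,[1]}(a)=0,
\end{equation*}
so that solving for $N_{0}$ and substituting $B_{n}^{c}=K_{n-1}^{c}(c,c)$ from (\ref{[S2]-Bcn}) yields exactly the claimed expression. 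Positivity of $N_{0}$ can either be inherited from the monotonicity argument above (since $a<y_{n,1}^{c,0}$ forces the crossing to occur at some $N_{0}>0$) or checked directly: $a-c>0$ and $K_{n-1}^{c}(c,c)>0$, while the interlacing of Theorem \ref{[S2]-THEO-2} places $a$ to the left of every zero of $Q_{n}^{c}$ and of $P_{n-1}^{c,[1]}$ (indeed $a<y_{n,1}^{c}<x_{n-1,1}^{c,[1]}$), whence $\mathrm{sign}\,Q_{n}^{c}(a)=(-1)^{n}$ and $\mathrm{sign}\,P_{n-1}^{c,[1]}(a)=(-1)^{n-1}$, making the quotient positive.

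Part $(b)$, with $c>b$, is entirely parallel, now invoking Theorem \ref{[S2]-THEO-3}: the largest zero $y_{n,n}^{c,N}$ increases from $y_{n,n}^{c}<b$ toward $c>b$, so it crosses $b$ at a unique $N_{0}>0$; evaluating (\ref{[S2]-ConnForm-Main}) at $x=b$ and proceeding as before produces the corresponding formula $N_{0}=-Q_{n}^{c}(b)/[K_{n-1}^{c}(c,c)(b-c)P_{n-1}^{c,[1]}(b)]$, again positive by the same sign bookkeeping.

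The main obstacle is not any single calculation but the linkage between the closed-form value and the qualitative threshold behaviour: one must be sure that $N_{0}$ is a genuine, single crossing of the boundary occurring at a positive mass rather than a spurious root of the connection formula. The cleanest way to dispatch this is to anchor the whole argument to the strict monotonicity and limits of Theorems \ref{[S2]-THEO-2} and \ref{[S2]-THEO-3}, which already guarantee existence and uniqueness of the crossing; the explicit value is then forced by (\ref{[S2]-ConnForm-Main}), and the direct sign count merely confirms $N_{0}>0$ independently.
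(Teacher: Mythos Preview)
Your proposal is correct and follows essentially the same approach as the paper: the paper's proof is extremely terse, simply noting that $Q_{n}^{c,N}(a)=0$ (resp.\ $Q_{n}^{c,N}(b)=0$) if and only if $N=N_{0}$, with the trichotomy implicitly carried by the monotonicity already established in Theorems~\ref{[S2]-THEO-2} and~\ref{[S2]-THEO-3}. You have spelled out exactly those details---the strict monotonicity and limits giving a unique crossing, the evaluation of (\ref{[S2]-ConnForm-Main}) at the endpoint yielding the explicit $N_{0}$, and the direct sign check confirming $N_{0}>0$---so your argument is a faithful, fuller version of what the paper only sketches.
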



\begin{proof}
$(a)$\ \ In order to deduce the location of $y_{n,1}^{c,N}$ with respect to
the point $x=a$, it is enough to observe that $Q_{n}^{c,N}(a)=0$ if and only
if $N=N_{0}$.

$(b)$\ \ Also, in order to find the location of $y_{n,n}^{c,N}$ with respect
to the point $x=b$, notice that $Q_{n}^{c,N}(b)=0$ if and only if $N=N_{0}$.
\end{proof}


\subsection{Ladder operators and second order linear differential equation}


Our next result concerns the \textit{ladder (creation and annihilation)
operators}, and the \textit{second order linear differential equation} (also
known as the \textit{holonomic equation}) corresponding to $%
\{Q_{n}^{c,N}\}_{n\geq 0}$. We restrict ourselves to the case in which $\mu $
is a classical or semi-classical measure, and therefore satisfying a
structure relation (see \cite{DM-ANM90} and \cite{Mar91}) as%
\begin{equation}
\sigma (x)[P_{n}(x)]^{\prime }=a(x;n)P_{n}(x)+b(x;n)P_{n-1}(x)
\label{[S2]-StructRelation}
\end{equation}%
where $a(x;n)$\ and $b(x;n)$\ are polynomials in the variable $x$, whose
fixed degree do not depend on $n$.

In order to obtain these results, we will follow a different approach as in 
\cite[Ch. 3]{Ism05}. Our technique is based on the connection formula (\ref%
{[S2]-ConnForm-1}) given in Proposition \ref{[S1]-PROP-1}, the three term
recurrence relation (\ref{[S1]-3TRR-Pn}) satisfied by $\{P_{n}\}_{n\geq 0}$,
and the structure relation (\ref{[S2]-StructRelation}). The results are
presented here and will be proved in Section \ref{[SECTION-3]-Proofs}.


\begin{theorem}[ladder operators]
\label{[S2]-THEO-4}Let $\mathfrak{a}_{n}$\ and $\mathfrak{a}_{n}^{\dag }$\
be the differential operators%
\begin{eqnarray*}
\mathfrak{a}_{n} &=&-\xi _{1}^{c}(x;n)\mathrm{I}+\mathrm{D}_{x}, \\
\mathfrak{a}_{n}^{\dag } &=&-\eta _{2}^{c}(x;n)\mathrm{I}+\mathrm{D}_{x},
\end{eqnarray*}%
where $\mathrm{I}$, $\mathrm{D}_{x}$\ are the identity and $x$-derivative
operators respectively, satisfying%
\begin{eqnarray}
\mathfrak{a}_{n}[Q_{n}^{c,N}(x)] &=&\eta _{1}^{c}(x;n)\,Q_{n-1}^{c,N}(x),
\label{[S2]-LoweringEq} \\
\mathfrak{a}_{n}^{\dag }[Q_{n-1}^{c,N}(x)] &=&\xi
_{2}^{c}(x;n)\,Q_{n}^{c,N}(x),  \label{[S2]-RaisingEq}
\end{eqnarray}%
with, for $k=1,2$%
\begin{eqnarray*}
\xi _{k}^{c}(x;n) &=&\frac{C_{k}(x;n)B_{2}(x;n)\,\gamma
_{n-1}+D_{k}(x;n)\Lambda _{n-1}^{c}}{\Delta (x;n)\,\gamma _{n-1}}, \\
\eta _{k}^{c}(x;n) &=&\frac{D_{k}(x;n)-C_{k}(x;n)\Lambda _{n}^{c}}{\Delta
(x;n)}.
\end{eqnarray*}%
\ 

In turn, all the above expressions are given only in terms of the
coefficients in (\ref{[S1]-3TRR-Pn}), (\ref{[S2]-StructRelation}), and (\ref%
{[S2]-ConnForm-1}) as follows%
\begin{eqnarray*}
B_{2}(x;n) &=&\Lambda _{n-1}^{c}\left( \frac{1}{\Lambda _{n-1}^{c}}+\frac{%
(x-\beta _{n-1})}{\gamma _{n-1}}\right) , \\
C_{1}(x;n) &=&\frac{1}{\sigma (x)}\left( a(x;n)-\Lambda _{n}^{c}\frac{%
b(x;n-1)}{\gamma _{n-1}}\right) , \\
D_{1}(x;n) &=&\frac{1}{\sigma (x)}\left( b(x;n)+\Lambda
_{n}^{c}\,b(x;n-1)\left( \frac{a(x;n-1)}{b(x;n-1)}+\frac{(x-\beta _{n-1})}{%
\gamma _{n-1}}\right) \right) , \\
C_{2}(x;n) &=&\dfrac{-\Lambda _{n-1}^{c}}{\sigma (x)}\left( \frac{a(x;n)}{%
\gamma _{n-1}}+\frac{b(x;n-1)}{\gamma _{n-1}}\left( \frac{1}{\Lambda
_{n-1}^{c}}+\frac{(x-\beta _{n-1})}{\gamma _{n-1}}\right) \right) , \\
D_{2}(x;n) &=&\dfrac{\Lambda _{n-1}^{c}}{\sigma (x)}\left[ \dfrac{\sigma
(x)-b(x;n)}{\gamma _{n-1}}\right. +b(x;n-1)\times \\
&&\quad \left. \left( \frac{a(x;n-1)}{b(x;n-1)}+\dfrac{(x-\beta _{n-1})}{%
\gamma _{n-1}}\right) \left( \dfrac{1}{\Lambda _{n-1}^{c}}+\dfrac{(x-\beta
_{n-1})}{\gamma _{n-1}}\right) \right] , \\
\Delta (x;n) &=&B_{2}(x;n)+\frac{\Lambda _{n}^{c}\,\Lambda _{n-1}^{c}}{%
\gamma _{n-1}},\quad \deg \Delta (x;n)=1.
\end{eqnarray*}%
Thus, $\mathfrak{a}_{n}$\ and $\mathfrak{a}_{n}^{\dag }$\ are\ respectively
lowering and raising operators associated to the Geronimus perturbed MOPS $%
\{Q_{n}^{c,N}\}_{n\geq 0}$.
\end{theorem}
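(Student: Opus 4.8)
The plan is to work throughout in the basis $\{P_n,P_{n-1}\}$ and to move back and forth between it and $\{Q_n^{c,N},Q_{n-1}^{c,N}\}$ via a single invertible, $x$-dependent linear change of coordinates. The connection formula (\ref{[S2]-ConnForm-1}) gives $Q_n^{c,N}=P_n+\Lambda_n^c P_{n-1}$ directly. Writing the same formula at level $n-1$ as $Q_{n-1}^{c,N}=P_{n-1}+\Lambda_{n-1}^c P_{n-2}$ and eliminating $P_{n-2}$ through the three-term recurrence (\ref{[S1]-3TRR-Pn}), in the form $\gamma_{n-1}P_{n-2}=(x-\beta_{n-1})P_{n-1}-P_n$, I obtain $Q_{n-1}^{c,N}=B_2(x;n)\,P_{n-1}-\tfrac{\Lambda_{n-1}^c}{\gamma_{n-1}}P_n$, where $B_2(x;n)=1+\tfrac{\Lambda_{n-1}^c(x-\beta_{n-1})}{\gamma_{n-1}}$ is precisely the polynomial in the statement. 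These two identities form a $2\times2$ system whose determinant is exactly $\Delta(x;n)=B_2(x;n)+\tfrac{\Lambda_n^c\Lambda_{n-1}^c}{\gamma_{n-1}}$, of degree one. Since $\Delta(x;n)\not\equiv 0$, I can invert to get $P_n=\Delta^{-1}\bigl(B_2\,Q_n^{c,N}-\Lambda_n^c\,Q_{n-1}^{c,N}\bigr)$ and $P_{n-1}=\Delta^{-1}\bigl(\tfrac{\Lambda_{n-1}^c}{\gamma_{n-1}}Q_n^{c,N}+Q_{n-1}^{c,N}\bigr)$.

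For the lowering relation I would differentiate $Q_n^{c,N}=P_n+\Lambda_n^c P_{n-1}$ and replace both derivatives by the structure relation (\ref{[S2]-StructRelation}), using the recurrence once more to clear the $P_{n-2}$ produced by $\sigma(x)[P_{n-1}]'$. Collecting the coefficients of $P_n$ and $P_{n-1}$ and cancelling the common factor $\sigma(x)$ yields $[Q_n^{c,N}]'=C_1(x;n)P_n+D_1(x;n)P_{n-1}$, with $C_1,D_1$ exactly as displayed. Substituting the inversion of the first paragraph and regrouping the $Q_n^{c,N}$ and $Q_{n-1}^{c,N}$ terms gives $[Q_n^{c,N}]'=\xi_1^c(x;n)\,Q_n^{c,N}+\eta_1^c(x;n)\,Q_{n-1}^{c,N}$, which is exactly $\mathfrak{a}_n[Q_n^{c,N}]=\eta_1^c(x;n)\,Q_{n-1}^{c,N}$.

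For the raising relation it is cleanest to differentiate the already-reduced expression $Q_{n-1}^{c,N}=B_2P_{n-1}-\tfrac{\Lambda_{n-1}^c}{\gamma_{n-1}}P_n$ rather than $Q_{n-1}^{c,N}=P_{n-1}+\Lambda_{n-1}^c P_{n-2}$, since the latter would drag in $P_{n-3}$ through $\sigma(x)[P_{n-2}]'$. Using $B_2'(x;n)=\tfrac{\Lambda_{n-1}^c}{\gamma_{n-1}}$ together with the structure relation for $[P_n]'$ and $[P_{n-1}]'$ (and the recurrence again to clear $P_{n-2}$), I collect coefficients to reach $[Q_{n-1}^{c,N}]'=C_2(x;n)P_n+D_2(x;n)P_{n-1}$ with $C_2,D_2$ as stated. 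Inserting the same inversion produces $[Q_{n-1}^{c,N}]'=\xi_2^c(x;n)\,Q_n^{c,N}+\eta_2^c(x;n)\,Q_{n-1}^{c,N}$, i.e. $\mathfrak{a}_n^{\dagger}[Q_{n-1}^{c,N}]=\xi_2^c(x;n)\,Q_n^{c,N}$.

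No deep idea is required beyond this change of basis; the real work is bookkeeping of the rational coefficients, and the hard part will be to check that the collected coefficients coincide on the nose with the intricate closed forms $B_2,C_k,D_k,\Delta$ in the statement — in particular recognizing the recurring factor $\tfrac{1}{\Lambda_{n-1}^c}+\tfrac{x-\beta_{n-1}}{\gamma_{n-1}}=B_2/\Lambda_{n-1}^c$ that organizes $C_2$ and $D_2$. A secondary point to record is that everything is an identity of rational functions: $\Delta(x;n)$ has degree one and hence only finitely many zeros, so $\xi_k^c,\eta_k^c$ are well defined away from them, and the factor $\sigma(x)$ cancels, so that $\mathfrak{a}_n$ and $\mathfrak{a}_n^{\dagger}$ are genuine first-order operators of the stated form.
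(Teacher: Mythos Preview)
Your proposal is correct and follows essentially the same route as the paper: the authors also express $Q_{n}^{c,N}$ and $Q_{n-1}^{c,N}$ in the basis $\{P_n,P_{n-1}\}$ (their Lemmas~\ref{[S3]-LEMA-3}--\ref{[S3]-LEMA-4}), invert the $2\times 2$ system via Cramer's rule (Lemma~\ref{[S3]-LEMA-5}), and substitute back into the derivative identities to read off $\xi_k^c,\eta_k^c$. One small caveat: your closing remark that ``the factor $\sigma(x)$ cancels'' is not accurate---$\sigma(x)$ remains in the denominators of $C_k,D_k$ and hence of $\xi_k^c,\eta_k^c$, so the ladder operators have rational (not polynomial) coefficients, exactly as stated in the theorem; this does not affect your argument.
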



For a deeper discusion of raising and lowering operators we refer the reader
to \cite[Ch. 3]{Ism05}. We next provide the second order linear differential
equation satisfied by the MOPS $\{{Q}_{n}^{c,N}\}_{n\geq 0}$ when the
measure $\mu $ is semi-classical (for definition of a semi-classical measure
see \cite{Mar91}). This is the main tool for the further electrostatic
interpretation of zeros.


\begin{theorem}[holonomic equation]
\label{[S2]-THEO-5} The Geronimus perturbed MOPS $\{Q_{n}^{c,N}\}_{n\geq 0}$%
\ satisfies the holonomic equation (second order linear differential
equation)%
\begin{equation}
\lbrack Q_{n}^{c,N}(x)]^{\prime \prime }+\mathcal{R}(x;n)[Q_{n}^{c,N}(x)]^{%
\prime }+\mathcal{S}(x;n)Q_{n}^{c,N}(x)=0,  \label{[S2]-2ndODE}
\end{equation}%
where%
\begin{eqnarray*}
\mathcal{R}(x;n) &=&-\left( \xi _{1}^{c}(x;n)+\eta _{2}^{c}(x;n)+\frac{[\eta
_{1}^{c}(x;n)]^{\prime }}{\eta _{1}^{c}(x;n)}\right) , \\
\mathcal{S}(x;n) &=&\xi _{1}^{c}(x;n)\eta _{2}^{c}(x;n)-\eta
_{1}^{c}(x;n)\xi _{2}^{c}(x;n) \\
&&\quad +\frac{\xi _{1}^{c}(x;n)[\eta _{1}^{c}(x;n)]^{\prime }-[\xi
_{1}^{c}(x;n)]^{\prime }\eta _{1}^{c}(x;n)}{\eta _{1}^{c}(x;n)}.
\end{eqnarray*}
\end{theorem}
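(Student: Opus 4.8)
The plan is to decouple the $2\times 2$ first-order system encoded in Theorem \ref{[S2]-THEO-4}, eliminating $Q_{n-1}^{c,N}$ so as to leave a single second-order equation for $Q_n^{c,N}$. First I would strip the operator notation from (\ref{[S2]-LoweringEq}) and (\ref{[S2]-RaisingEq}) and write them as the explicit system
\begin{equation*}
[Q_n^{c,N}]' = \xi_1^c(x;n)\,Q_n^{c,N} + \eta_1^c(x;n)\,Q_{n-1}^{c,N}, \qquad [Q_{n-1}^{c,N}]' = \xi_2^c(x;n)\,Q_n^{c,N} + \eta_2^c(x;n)\,Q_{n-1}^{c,N}.
\end{equation*}
Solving the first relation algebraically gives $Q_{n-1}^{c,N} = (\eta_1^c)^{-1}\big([Q_n^{c,N}]' - \xi_1^c\,Q_n^{c,N}\big)$, valid wherever $\eta_1^c(x;n)\neq 0$.

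Next I would differentiate the lowering relation once, obtaining
\begin{equation*}
[Q_n^{c,N}]'' = [\xi_1^c]'\,Q_n^{c,N} + \xi_1^c\,[Q_n^{c,N}]' + [\eta_1^c]'\,Q_{n-1}^{c,N} + \eta_1^c\,[Q_{n-1}^{c,N}]',
\end{equation*}
and then replace $[Q_{n-1}^{c,N}]'$ using the raising relation. After that substitution the remaining dependence on $Q_{n-1}^{c,N}$ is linear, with coefficient $[\eta_1^c]' + \eta_1^c\eta_2^c$, so inserting the algebraic expression for $Q_{n-1}^{c,N}$ from the first step removes it entirely. Collecting the coefficient of $[Q_n^{c,N}]'$ yields $\xi_1^c + \eta_2^c + [\eta_1^c]'/\eta_1^c$, which is precisely $-\mathcal{R}(x;n)$; collecting the coefficient of $Q_n^{c,N}$ yields $[\xi_1^c]' - \xi_1^c\big([\eta_1^c]'/\eta_1^c + \eta_2^c\big) + \eta_1^c\xi_2^c$, whose negative reproduces $\mathcal{S}(x;n)$ once one rewrites $[\xi_1^c]' - \xi_1^c[\eta_1^c]'/\eta_1^c = \big([\xi_1^c]'\eta_1^c - \xi_1^c[\eta_1^c]'\big)/\eta_1^c$ together with the overall sign flip. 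This is the standard ladder-operator route to a holonomic equation, in the spirit of \cite[Ch. 3]{Ism05}.

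Given Theorem \ref{[S2]-THEO-4}, the whole argument is a mechanical elimination rather than anything structural, so I do not expect a genuine obstacle. The only point deserving attention is the division by $\eta_1^c(x;n)$: since this factor sits in the denominators appearing in both $\mathcal{R}$ and $\mathcal{S}$, the equation (\ref{[S2]-2ndODE}) is to be read as an identity of rational functions holding off the finite zero set of $\eta_1^c$. That the elimination is legitimate rests on $\eta_1^c(x;n)\not\equiv 0$, which is guaranteed because the lowering operator of Theorem \ref{[S2]-THEO-4} genuinely sends $Q_n^{c,N}$ to a nonzero multiple of $Q_{n-1}^{c,N}$; no additional appeal to orthogonality is needed beyond what is already built into the ladder operators.
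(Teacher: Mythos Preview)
Your proposal is correct and follows essentially the same route as the paper: the paper derives (\ref{[S2]-2ndODE}) by applying the raising operator to $(\eta_1^c)^{-1}\mathfrak{a}_n[Q_n^{c,N}]$ and equating with $\xi_2^c\,Q_n^{c,N}$, which is exactly your differentiate-and-eliminate procedure written in operator form. Your explicit check of the coefficients and the remark on the nonvanishing of $\eta_1^c$ are accurate and, if anything, slightly more detailed than what the paper records.
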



\section{Proofs of the main results.}

\label{[SECTION-3]-Proofs} 


\subsection{Proof of Theorem \protect\ref{[S1]-THEO-1} and the positivity of 
$B_{n}^{c}$}


First, we need to prove the following lemma concerning a first way to
represent the Geronimus perturbed polynomials $Q_{n}^{c,N}(x)$, using the
kernels (\ref{[S1]-Kcn(xy)}).

\begin{lemma}
\label{[S3]-LEMA-1}Let $\{Q_{n}^{c,N}\}_{n\geq 0}$ and $\{Q_{n}^{c}\}_{n\geq
0}$ be the MOPS corresponding to the measures $d\nu _{N}$ and $d\nu (x)$
respectively. Then, the following connection formula holds%
\begin{equation}
Q_{n}^{c,N}(x)=Q_{n}^{c}(x)-NQ_{n}^{c,N}(c)K_{n-1}^{c}(x,c),
\label{[S3]-FConex-0}
\end{equation}%
where%
\begin{equation}
Q_{n}^{c,N}(c)=\frac{Q_{n}^{c}(c)}{1+NK_{n-1}^{c}(c,c)}=\kappa
_{n}^{-1}Q_{n}^{c}(c),  \label{[S3]-RelacQs}
\end{equation}%
${\kappa }_{n}=1+NB_{n}^{c}$, and $K_{n-1}^{c}(c,c)$ is given in (\ref%
{[S1]-KcnConfl}).
\end{lemma}

\begin{proof}
From (\ref{[S1]-InnProd-Rel-2}) it is trivial to express $Q_{n}^{c,N}(x)$ in
terms of the polynomials $Q_{n}^{c}(x)$%
\begin{equation}
Q_{n}^{c,N}(x)=\sum_{k=0}^{n}b_{n,k}Q_{n}^{c}(x),  \label{[S3]-expansion}
\end{equation}%
having%
\begin{equation*}
b_{n,k}=\frac{\langle Q_{i}^{c}(x),Q_{n}^{c,N}(x)\rangle _{\nu }}{\left\Vert
Q_{k}^{c}\right\Vert _{\nu }^{2}},\quad 0\leq k\leq n-1.
\end{equation*}%
Thus, (\ref{[S3]-expansion}) becomes%
\begin{equation*}
Q_{n}^{c,N}(x)=Q_{n}^{c}(x)-NQ_{n}^{c,N}(c)\sum_{k=0}^{n-1}\frac{%
Q_{k}^{c}(x)Q_{k}^{c}\left( c\right) }{\left\Vert Q_{k}^{c}\right\Vert _{\nu
}^{2}}.
\end{equation*}%
Next, taking into account (\ref{[S1]-Kernel-n}) for the sequence $%
\{Q_{k}^{c}\}_{n\geq 0}$, we get%
\begin{equation*}
Q_{n}^{c,N}(x)=Q_{n}^{c}(x)-NQ_{n}^{c,N}(c)K_{n-1}^{c}(x,c).
\end{equation*}%
In order to find $Q_{n}^{c,N}(c)$, we evaluate (\ref{[S3]-FConex-0}) in $x=c$%
. Thus%
\begin{equation}
Q_{n}^{c,N}(c)=\frac{Q_{n}^{c}(c)}{1+NK_{n-1}^{c}(c,c)}.  \label{[S3]-QncN}
\end{equation}%
This completes the proof.
\end{proof}


Next, in order to prove the orthogonality of the polynomials defined by (\ref%
{[S2]-ConnForm-Main}), we deal with the basis $\mathcal{B}%
^{n}=\{1,(x-c),(x-c)^{2},\ldots ,(x-c)^{n}\}$ of the space of polynomials of
degree at most $n$. We prove that there exist a positive constant $B_{n}^{c}$
such that every element in this basis is orthogonal to every polynomial of
the sequence$\{\tilde{Q}_{n}^{c,N}\}_{n\geq 0}$ with respect to the inner
product (\ref{[S1]-InnProd-4}). Thus, from (\ref{[S1]-InnProd-Rel-2}), (\ref%
{[S2]-ConnForm-Main}) and $\tilde{Q}_{n}^{c,N}(c)=\kappa _{n}Q_{n}^{c,N}(c)$
we have%
\begin{equation*}
\langle 1,\tilde{Q}_{n}^{c,N}\rangle _{\nu _{N}}=\langle
1,Q_{n}^{c}(x)\rangle _{\nu }+NB_{n}^{c}\langle
1,(x-c)P_{n-1}^{c,[1]}(x)\rangle _{\nu }+N\kappa _{n}Q_{n}^{n,N}(c)=0.
\end{equation*}%
Notice that to get $\langle 1,\tilde{Q}_{n}^{c}(x)\rangle _{\nu }=0$ for
every $n>1$ we need%
\begin{equation}
B_{n}^{c}=\frac{-\kappa _{n}Q_{n}^{c,N}(c)}{\langle
1,(x-c)P_{n-1}^{c,[1]}(x)\rangle _{\nu }}.  \label{[S3]-Bn1st}
\end{equation}%
Next, we prove the orthogonality with respect to the other elements of $%
\mathcal{B}^{n}$. From (\ref{[S2]-ConnForm-Main}), (\ref{[S1]-InnProd-Rel-3}%
), (\ref{[S1]-InnProd-Rel-4}) and the orthogonality with respect to $d\nu
(x) $ and $d\mu ^{\lbrack 1]}(x)$ we get%
\begin{equation*}
\langle (x-c),Q_{n}^{c,N}(x)\rangle _{\nu _{N}}=\langle
(x-c),Q_{n}^{c}(x)\rangle _{\nu }+NB_{n}^{c}\langle
1,P_{n-1}^{c,[1]}(x)\rangle _{\mu ,[1]}=0,\quad n>1.
\end{equation*}%
We continue in this fashion, verifying that%
\begin{eqnarray*}
\langle (x-c)^{n-1},\tilde{Q}_{n}^{c,N}(x)\rangle _{\nu _{N}} &=&\langle
(x-c)^{n-1},Q_{n}^{c}(x)\rangle _{\nu _{N}}+NB_{n}^{c}\langle
(x-c)^{n-1},(x-c)P_{n-1}^{c,[1]}(x)\rangle _{\nu _{N}} \\
&=&\langle (x-c)^{n-1},Q_{n}^{c}(x)\rangle _{\nu }+NB_{n}^{c}\langle
(x-c)^{n-2},P_{n-1}^{c,[1]}(x)\rangle _{\mu ,[1]}=0,
\end{eqnarray*}%
and finally%
\begin{eqnarray*}
\langle (x-c)^{n},\tilde{Q}_{n}^{c,N}\rangle _{\nu _{N}} &=&\Vert \tilde{Q}%
_{n}^{c,N}\Vert _{\nu _{N}}^{2} \\
&=&\langle (x-c)^{n},Q_{n}^{c}(x)\rangle _{\nu }+NB_{n}^{c}\langle
(x-c)^{n-1},P_{n-1}^{c,[1]}(x)\rangle _{_{\mu ,[1]}} \\
&=&\Vert Q_{n}^{c}\Vert _{\nu }^{2}+NB_{n}^{c}\Vert P_{n-1}^{c,[1]}\Vert
_{\mu ,[1]}^{2}.
\end{eqnarray*}%
Sumarizing%
\begin{eqnarray*}
\langle 1,\tilde{Q}_{n}^{c,N}\rangle _{\nu _{N}} &=&\langle
1,Q_{n}^{c}\rangle _{\nu }+NB_{n}^{c}\langle 1,P_{n-1}^{c,[1]}\rangle _{\mu
}+NQ_{n}^{c}(c)=0, \\
\langle (x-c),\tilde{Q}_{n}^{c,N}\rangle _{\nu _{N}} &=&\langle
(x-c),Q_{n}^{c}\rangle _{\nu }+NB_{n}^{c}\langle 1,P_{n-1}^{c,[1]}\rangle
_{\mu ,[1]}=0, \\
&&\vdots \\
\langle (x-c)^{n-1},\tilde{Q}_{n}^{c,N}\rangle _{\nu _{N}} &=&\langle
(x-c)^{n-1},Q_{n}^{c}\rangle _{\nu }+NB_{n}^{c}\langle
(x-c)^{n-2},P_{n-1}^{c,[1]}\rangle _{\mu ,[1]} \\
&=&0, \\
\langle (x-c)^{n},\tilde{Q}_{n}^{c,N}\rangle _{\nu _{N}} &=&\Vert
Q_{n}^{c}\Vert _{\nu }^{2}+NB_{n}^{c}\Vert P_{n-1}^{c,[1]}\Vert _{\mu
,[1]}^{2}.
\end{eqnarray*}


Next, we briefly analyze the value of $B_{n}^{c}$ in (\ref{[S3]-Bn1st}) and
we prove (\ref{[S2]-Bcn}). From (\ref{[S1]-Kernel-n}) and (\ref{[S1]-Pc1n})
we have%
\begin{eqnarray*}
\langle 1,(x-c)P_{n-1}^{c,[1]}\rangle _{\nu } &=&\langle
1,P_{n-1}^{c,[1]}(x)\rangle _{\mu }=\frac{\Vert P_{n-1}\Vert _{\mu }^{2}}{%
P_{n-1}(c)}\langle 1,K_{n-1}(x,c)\rangle _{\mu } \\
&=&\frac{\Vert P_{n-1}\Vert _{\mu }^{2}}{P_{n-1}(c)}\sum_{k=0}^{n-1}\frac{%
P_{k}(c)}{||P_{k}||_{\mu }^{2}}\langle 1,P_{k}(x)\rangle _{\mu }.
\end{eqnarray*}%
Because the orthogonality, the only term which survive in the above sum is
for $k=0$, hence%
\begin{equation*}
\sum_{k=0}^{n-1}\frac{P_{k}(c)}{||P_{k}||_{\mu }^{2}}\langle
1,P_{k}(x)\rangle _{\mu }=1,
\end{equation*}%
and therefore%
\begin{equation*}
\langle 1,(x-c)P_{n-1}^{c,[1]}\rangle _{\nu }=\frac{\Vert P_{n-1}\Vert _{\mu
}^{2}}{P_{n-1}(c)}.
\end{equation*}%
Thus, taking into account (\ref{[S3]-RelacQs})%
\begin{equation}
B_{n}^{c}=\frac{-Q_{n}^{c}(c)P_{n-1}(c)}{\Vert P_{n-1}\Vert _{\mu }^{2}}.
\label{[S3]-BnValue}
\end{equation}%
In order to prove (\ref{[S2]-Bcn}), from (\ref{[S1]-Pc1n}), (\ref%
{[S1]-PnmonicKc}), (\ref{[S1]-RepPropKc}) we get%
\begin{eqnarray}
\langle (x-c),P_{n-1}^{c,[1]}(x)\rangle _{\nu } &=&\int_{E}\left( x-c\right) 
\frac{1}{(x-c)}\left( P_{n+1}(x)-\frac{P_{n+1}(c)}{P_{n}(c)}P_{n}(x)\right)
d\nu (x)  \notag \\
&=&\int_{E}P_{n}(x)d\nu (x)-\frac{P_{n}(c)}{P_{n-1}(c)}\int_{E}P_{n-1}(x)d%
\nu (x)  \notag \\
&=&\frac{\Vert Q_{n}^{c}\Vert _{\nu }^{2}}{Q_{n}^{c}(c)}%
\int_{E}K_{n}^{c}(x,c)d\nu (x)  \notag \\
&&-\frac{Q_{n-1}^{c}(c)}{\Vert Q_{n-1}^{c}\Vert _{\nu }^{2}}\frac{\Vert
Q_{n}^{c}\Vert _{\nu }^{2}}{Q_{n}^{c}(c)}\frac{K_{n}^{c}(c,c)}{%
K_{n-1}^{c}(c,c)}\frac{\Vert Q_{n-1}^{c}\Vert _{\nu }^{2}}{Q_{n-1}^{c}(c)}%
\int_{E}K_{n-1}^{c}(x,c)d\nu (x)  \notag \\
&=&\frac{\Vert Q_{n}^{c}\Vert _{\nu }^{2}}{Q_{n}^{c}(c)}-\frac{Q_{n-1}^{c}(c)%
}{\Vert Q_{n-1}^{c}\Vert _{\nu }^{2}}\frac{\Vert Q_{n}^{c}\Vert _{\nu }^{2}}{%
Q_{n}^{c}(c)}\frac{K_{n}^{c}(c,c)}{K_{n-1}^{c}(c,c)}\frac{\Vert
Q_{n-1}^{c}\Vert _{\nu }^{2}}{Q_{n-1}^{c}(c)}  \notag \\
&=&\frac{\Vert Q_{n}^{c}\Vert _{\nu }^{2}}{Q_{n}^{c}(c)}\left( 1-\frac{%
K_{n}^{c}(c,c)}{K_{n-1}^{c}(c,c)}\right) .  \label{[S3]-Buffer}
\end{eqnarray}%
A general property for kernels is, from (\ref{[S1]-Kcn(xy)})%
\begin{equation*}
K_{n}^{c}(x,c)=\sum\limits_{k=0}^{n}\frac{Q_{k}^{c}(x)Q_{k}^{c}(c)}{%
||Q_{k}^{c}||_{\nu }^{2}}=\frac{Q_{n}^{c}(x)Q_{n}^{c}(c)}{||Q_{n}^{c}||_{\nu
}^{2}}+K_{n-1}^{c}(x,c)
\end{equation*}%
and therefore%
\begin{equation}
\left( 1-\frac{K_{n}^{c}(c,c)}{K_{n-1}^{c}(c,c)}\right) =\frac{%
-[Q_{n}^{c}(c)]^{2}}{||Q_{n}^{c}||_{\nu }^{2}K_{n-1}^{c}(c,c)}
\label{[S3]-1mratioKc}
\end{equation}%
Replacing in (\ref{[S3]-Buffer})%
\begin{equation*}
\langle (x-c),P_{n-1}^{c,[1]}(x)\rangle _{\nu }=\frac{\Vert Q_{n}^{c}\Vert
_{\nu }^{2}}{Q_{n}^{c}(c)}\left( \frac{-[Q_{n}^{c}(c)]^{2}}{%
||Q_{n}^{c}||_{\nu }^{2}K_{n-1}^{c}(c,c)}\right) =\frac{-Q_{n}^{c}(c)}{%
K_{n-1}^{c}(c,c)}.
\end{equation*}%
Thus, from (\ref{[S3]-QncN}) and (\ref{[S3]-Bn1st})%
\begin{equation*}
B_{n}^{c}=\frac{-\kappa _{n}Q_{n}^{c,N}(c)}{\langle
1,(x-c)P_{n-1}^{c,[1]}(x)\rangle _{\nu }}=\frac{-\kappa _{n}\frac{%
Q_{n}^{c}(c)}{1+NK_{n-1}^{c}(c,c)}}{\frac{-Q_{n}^{c}(c)}{K_{n-1}^{c}(c,c)}}%
=K_{n-1}^{c}(c,c).
\end{equation*}%
Finally, being $c\notin \mathrm{supp}(\nu) $, from (\ref{[S1]-PositiveKcn})
we can conclude that $B_{n}^{c}$ is always positive 
\begin{equation*}
B_{n}^{c}=K_{n-1}^{c}(c,c)>0.
\end{equation*}


\subsection{Proofs of Theorems \textbf{\protect\ref{[S2]-THEO-2} and \protect
\ref{[S2]-THEO-3}}}


To apply the Interlacing Lemma and get the results of Theorems \ref%
{[S2]-THEO-2} and \ref{[S2]-THEO-3}, we need to show that we satisfy the
hypotheses of the Interlacing Lemma. To do this, we first prove that the
zeros of $Q_{n}^{c}(x)$\ and $(x-c)P_{n-1}^{c,[1]}(x)$ interlace.


\begin{lemma}
\label{[S3]-LEMA-2} Let $y_{n,k}^{c}$ and $x_{n,k}^{c,[1]}$ be the zeros of $%
Q_{n}^{c}(x)$ and $P_{n}^{c,[1]}(x)$, respectively, all arranged in an
increasing order. The inequalities%
\begin{equation*}
y_{n+1,1}^{c}<x_{n,1}^{c,[1]}<y_{n+1,2}^{c}<x_{n,2}^{c,[1]}<\cdots
<y_{n+1,n}^{c}<x_{n,n}^{c,[1]}<y_{n+1,n+1}^{c}
\end{equation*}%
hold for every $n\in \mathbb{N}$.
\end{lemma}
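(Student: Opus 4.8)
The plan is to reduce the interlacing to a statement about the signs of $P_n^{c,[1]}$ at the $n+1$ zeros of $Q_{n+1}^{c}$. Both $P_n^{c,[1]}$ and $Q_{n+1}^{c}$ are monic orthogonal polynomials with respect to positive measures (after adjusting the sign of the weight when $c>b$, see below), so they have exactly $n$ and $n+1$ simple real zeros in $(a,b)$, respectively; moreover $\{Q_n^{c}\}_{n\geq 0}$ is a genuine MOPS, whence its consecutive members $Q_n^{c}$ and $Q_{n+1}^{c}$ have strictly interlacing zeros. In particular $Q_n^{c}(y_{n+1,k}^{c})\neq 0$ for every $k$ and the values $Q_n^{c}(y_{n+1,1}^{c}),\dots,Q_n^{c}(y_{n+1,n+1}^{c})$ alternate in sign. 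If I can show that $P_n^{c,[1]}$, evaluated at the zeros of $Q_{n+1}^{c}$, equals $Q_n^{c}$ at those same points up to a fixed nonzero factor and a strictly positive factor, then the interlacing will follow from a simple counting argument.

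The key step is a purely algebraic identity obtained from the connection formulas already at hand. Writing $\lambda_n=Q_{n+1}^{c}(c)/Q_n^{c}(c)$, formula (\ref{[S1]-PnmonicKc}) gives $(x-c)P_n(x)=Q_{n+1}^{c}(x)-\lambda_n Q_n^{c}(x)$, while (\ref{[S1]-Pc1n}) gives $(x-c)P_n^{c,[1]}(x)=P_{n+1}(x)-\pi_n P_n(x)$. Substituting $P_{n+1}=(x-c)P_n^{c,[1]}+\pi_n P_n$ into the representation $Q_{n+1}^{c}=P_{n+1}-r_n P_n$ coming from (\ref{[S1]-ConnForm-0}) yields $Q_{n+1}^{c}(x)=(x-c)P_n^{c,[1]}(x)+(\pi_n-r_n)P_n(x)$. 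Evaluating this at a zero $y=y_{n+1,k}^{c}$ of $Q_{n+1}^{c}$ gives $(\pi_n-r_n)P_n(y)=-(y-c)P_n^{c,[1]}(y)$, and the first relation gives $(y-c)P_n(y)=-\lambda_n Q_n^{c}(y)$, so that $P_n^{c,[1]}(y)=\dfrac{(\pi_n-r_n)\lambda_n}{(y-c)^{2}}\,Q_n^{c}(y)$. Finally, evaluating $Q_{n+1}^{c}=(x-c)P_n^{c,[1]}+(\pi_n-r_n)P_n$ at $x=c$ gives $Q_{n+1}^{c}(c)=(\pi_n-r_n)P_n(c)$, and since $c\notin[a,b]$ forces $Q_{n+1}^{c}(c)$, $Q_n^{c}(c)$ and $P_n(c)$ to be nonzero, both $\pi_n-r_n$ and $\lambda_n$ are nonzero.

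It then follows that $P_n^{c,[1]}(y_{n+1,k}^{c})$ equals the fixed nonzero constant $(\pi_n-r_n)\lambda_n$ times the strictly positive quantity $(y_{n+1,k}^{c}-c)^{-2}$ times $Q_n^{c}(y_{n+1,k}^{c})$; hence $P_n^{c,[1]}$ inherits the sign alternation of $Q_n^{c}$ along the zeros of $Q_{n+1}^{c}$. Consequently $P_n^{c,[1]}$ changes sign in each of the $n$ open intervals $(y_{n+1,k}^{c},y_{n+1,k+1}^{c})$ and so has at least one zero in each; since it has exactly $n$ zeros, there is exactly one in each interval, which is precisely the asserted interlacing. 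I expect the only delicate point to be the bookkeeping of signs when $c>b$: there the weights $1/(x-c)$ and $(x-c)$ are negative on $E$, so I would first observe that $Q_{n+1}^{c}$ and $P_n^{c,[1]}$ are the MOPS of the positive measures $d\mu/(c-x)$ and $(c-x)\,d\mu$, and therefore still carry the required number of simple zeros in $(a,b)$ with the interlacing of consecutive $Q^{c}$'s intact. The identity above is purely algebraic and holds verbatim in both cases, so the sign-counting conclusion is unaffected.
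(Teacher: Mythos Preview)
Your argument is correct and lands on the same key identity as the paper: at each zero $y=y_{n+1,k}^{c}$ of $Q_{n+1}^{c}$ one has $(y-c)^{2}P_{n}^{c,[1]}(y)=C\,Q_{n}^{c}(y)$ for a fixed nonzero constant $C$, and the strict interlacing of $Q_{n}^{c}$ and $Q_{n+1}^{c}$ then forces a sign change of $P_{n}^{c,[1]}$ on each subinterval. The paper reaches this point differently: it first combines (\ref{[S1]-PnmonicKc}) and (\ref{[S1]-Pc1n}) to write $(x-c)^{2}P_{n}^{c,[1]}$ as a three-term combination of $Q_{n+2}^{c},Q_{n+1}^{c},Q_{n}^{c}$, then invokes the three-term recurrence of $\{Q_{n}^{c}\}$ to eliminate $Q_{n+2}^{c}$, and finally computes the constant explicitly as $e_{n}^{c}-\gamma_{n+2}^{c}=[Q_{n+1}^{c}(c)]^{2}/\big(\|Q_{n}^{c}\|_{\nu}^{2}K_{n}^{c}(c,c)\big)>0$. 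Your route is more elementary in two respects: you bypass the recurrence for $\{Q_{n}^{c}\}$ entirely by working directly with the connection formulas (\ref{[S1]-Pc1n}), (\ref{[S1]-ConnForm-0}), (\ref{[S1]-PnmonicKc}); and you observe, correctly, that only $C\neq 0$ (not $C>0$) is needed for the sign-alternation conclusion, which spares you the positivity computation. The paper's approach, on the other hand, yields the explicit positive value of $C$ and thus the actual (not merely alternating) sign of $P_{n}^{c,[1]}$ at each $y_{n+1,k}^{c}$, which is not needed here but could be useful elsewhere. Your remark on the case $c>b$ is also apt and handles something the paper's proof glosses over.
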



\begin{proof}
Combining (\ref{[S1]-PnmonicKc}) with (\ref{[S1]-Pc1n}) yields%
\begin{equation}
(x-c)^{2}P_{n}^{c,[1]}(x)=Q_{n+2}^{c}(x)-d_{n}^{c}Q_{n+1}^{c}(x)+e_{n}^{c}Q_{n}^{c}(x),
\label{[S3]-ConnF-PQcn-1}
\end{equation}%
where%
\begin{eqnarray*}
e_{n}^{c} &=&\frac{P_{n+1}(c)}{P_{n}(c)}\frac{Q_{n+1}^{c}(c)}{Q_{n}^{c}(c)}
\\
&=&\frac{\Vert Q_{n+1}^{c}\Vert _{\nu }^{2}}{\Vert Q_{n}^{c}\Vert _{\nu }^{2}%
}\frac{K_{n+1}^{c}(c,c)}{K_{n}^{c}(c,c)}>0,
\end{eqnarray*}%
and%
\begin{eqnarray*}
d_{n}^{c} &=&\frac{Q_{n+2}^{c}(c)}{Q_{n+1}^{c}(c)}+\frac{P_{n+1}(c)}{P_{n}(c)%
} \\
&=&\frac{Q_{n+2}^{c}(c)}{Q_{n+1}^{c}(c)}+\frac{Q_{n}^{c}(c)}{Q_{n+1}^{c}(c)}%
\frac{\Vert Q_{n+1}^{c}\Vert _{\nu }^{2}}{\Vert Q_{n}^{c}\Vert _{\nu }^{2}}%
\frac{K_{n+1}^{c}(c,c)}{K_{n}^{c}(c,c)} \\
&=&\frac{Q_{n+2}^{c}(c)+Q_{n}^{c}(c)e_{n}^{c}}{Q_{n+1}^{c}(c)}.
\end{eqnarray*}%
On the other hand, the sequence $\{Q_{n}^{c}\}_{n\geq 0}$ satisfies the
three term recurrence relation%
\begin{equation}
Q_{n}^{c}(x)=(x-\beta _{n}^{c})Q_{n-1}^{c}(x)-\gamma
_{n}^{c}Q_{n-2}^{c}(x),\quad n=1,2,\ldots  \label{[S3]-TTRRQcn}
\end{equation}%
The coefficients $\beta _{n}^{c}$, and $\gamma _{n}^{c}$ are given in
several works. A particularly clear discussion about how to obtain $\beta
_{n}^{c}$, $\gamma _{n}^{c}$ from those $\beta _{n}$, $\gamma _{n}$ of the
initial $\mu $ is given in \cite[$§2.4.4$]{Gauts04}. From (\ref{[S1]-rn(c)}%
), for $n\geq 1$, the modified coefficients are given by%
\begin{eqnarray*}
\beta _{n}^{c} &=&\beta _{n}+r_{n}-r_{n-1}, \\
\gamma _{n}^{c} &=&\gamma _{n-1}\frac{r_{n-1}}{r_{n-2}},
\end{eqnarray*}%
with the initial convention, for $n=0$,%
\begin{eqnarray*}
\beta _{0}^{c} &=&\beta _{0}+r_{0}, \\
\gamma _{0}^{c} &=&\int_{E}d\nu (x)=\int_{E}\frac{1}{x-c}d\mu (x)=-F_{0}(c).
\end{eqnarray*}%
Combining (\ref{[S3]-ConnF-PQcn-1}) with (\ref{[S3]-TTRRQcn}) yields%
\begin{equation}
(x-c)^{2}P_{n}^{c,[1]}(x)=\left( x-\beta _{n+2}^{c}-d_{n}^{c}\right)
Q_{n+1}^{c}(x)+\left( e_{n}^{c}-\gamma _{n+2}^{c}\right) Q_{n}^{c}(x).
\label{[S3]-CF-PQcn-2}
\end{equation}%
Being $\mu $ a positive definite measure, the modified measure $d\nu $ is
also positive definite, because $c\notin E= \mathrm{supp}(\mu)$ and
therefore $(x-c)^{-1}$ do not change sign in $E$. Hence, by \cite[Th. 4.2 (a)%
]{Chi78} and (\ref{[S3]-1mratioKc}), the coefficient of $Q_{n}^{c}(x)$ in (%
\ref{[S3]-CF-PQcn-2}) can be expressed by%
\begin{eqnarray}
e_{n}^{c}-\gamma _{n+2}^{c} &=&\frac{\Vert Q_{n+1}^{c}\Vert _{\nu }^{2}}{%
\Vert Q_{n}^{c}\Vert _{\nu }^{2}}\frac{K_{n+1}^{c}(c,c)}{K_{n}^{c}(c,c)}-%
\frac{||Q_{n+1}^{c}||_{\nu }^{2}}{||Q_{n}^{c}||_{\nu }^{2}}  \notag \\
&=&\frac{||Q_{n+1}^{c}||_{\nu }^{2}}{||Q_{n}^{c}||_{\nu }^{2}}\left( \frac{%
K_{n+1}^{c}(c,c)}{K_{n}^{c}(c,c)}-1\right)  \label{[S3]-coefQcnPos} \\
&=&\frac{1}{||Q_{n}^{c}||_{\nu }^{2}}\frac{[Q_{n+1}^{c}(c)]^{2}}{%
K_{n}^{c}(c,c)}>0,  \notag
\end{eqnarray}%
which is positive for every $n\geq 0$, no matter the position of $c$ with
respect to the interval $E$.

Finally, evaluating $P_{n}^{c,[1]}(x)$ at the zeros $y_{n+1,k}^{c}$, from (%
\ref{[S3]-CF-PQcn-2}) and (\ref{[S3]-coefQcnPos}), we get%
\begin{equation*}
(x-c)^{2}P_{n}^{c,[1]}(y_{n+1,k}^{c})=\left( e_{n}^{c}-\gamma
_{n+2}^{c}\right) Q_{n}^{c}(y_{n+1,k}^{c}),\quad k=1,\ldots ,n+1,
\end{equation*}%
so it is clear that%
\begin{equation}
sign(P_{n}^{c,[1]}(y_{n+1,k}^{c}))=sign(Q_{n}^{c}(y_{n+1,k}^{c})),\quad
k=1,\ldots ,n+1.  \label{[S3]-Prop2}
\end{equation}%
Thus, from (\ref{[S3]-Prop2}) and the very well known fact that the zeros of 
$Q_{n+1}^{c}(x)$\ interlace with the zeros of $Q_{n}^{c}(x)$, we conclude
that $P_{n}^{c,[1]}(x)$ has at least one zero in every interval $%
(y_{n+1,k}^{c},y_{n+1,k+1}^{c})$\ for every $k=1,\ldots n$. This completes
the proof.
\end{proof}



\subsection{Proofs of Theorems \protect\ref{[S2]-THEO-4} and \protect\ref%
{[S2]-THEO-5}}


We begin by proving several lemmas that are needed for the proof of Theorem %
\ref{[S2]-THEO-4}.


\begin{lemma}
\label{[S3]-LEMA-3}For the MOPS $\{Q_{n}^{c,N}\}_{n\geq 0}$ and $%
\{P_{n}\}_{n\geq 0}$ we have%
\begin{equation}
\lbrack {Q_{n}^{c,N}(x)}]^{\prime }=C_{1}(x;n)P_{n}(x)+D_{1}(x;n)P_{n-1}(x),
\label{[S3]-DerQ-C1D1}
\end{equation}%
where%
\begin{eqnarray}
C_{1}(x;n) &=&\frac{1}{\sigma (x)}\left( a(x;n)-\Lambda _{n}^{c}\frac{%
b(x;n-1)}{\gamma _{n-1}}\right) ,  \label{[S3]-Coefs-ABCD1} \\
D_{1}(x;n) &=&\frac{1}{\sigma (x)}\left( b(x;n)+\Lambda
_{n}^{c}\,b(x;n-1)\left( \frac{a(x;n-1)}{b(x;n-1)}+\frac{(x-\beta _{n-1})}{%
\gamma _{n-1}}\right) \right) .  \notag
\end{eqnarray}%
The coefficient $\Lambda _{n}^{c}$\ is given in (\ref{[S2]-LambdacN}), $%
\beta _{n-1}$, $\gamma _{n-1}$ are given in (\ref{[S1]-3TRR-Pn})\ and $%
\sigma (x)$, $a(x;n)$, $b(x;n)$ come from the structure relation (\ref%
{[S2]-StructRelation}) satisfied by $\{P_{n}\}_{n\geq 0}$.
\end{lemma}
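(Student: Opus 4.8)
The plan is to differentiate the connection formula (\ref{[S2]-ConnForm-1}) from Proposition \ref{[S1]-PROP-1} and then systematically eliminate all occurrences of derivatives $[P_n]'$ and $[P_{n-1}]'$ in favor of the polynomials $P_n$ and $P_{n-1}$ themselves, using the structure relation (\ref{[S2]-StructRelation}) together with the three term recurrence relation (\ref{[S1]-3TRR-Pn}). Since $\Lambda_n^c$ is independent of $x$ (as noted in the proposition), differentiating (\ref{[S2]-ConnForm-1}) gives immediately
\begin{equation*}
[Q_n^{c,N}(x)]' = [P_n(x)]' + \Lambda_n^c\,[P_{n-1}(x)]'.
\end{equation*}
The goal is to rewrite the right-hand side in the form $C_1(x;n)P_n(x) + D_1(x;n)P_{n-1}(x)$ with the stated coefficients.

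First I would apply the structure relation (\ref{[S2]-StructRelation}) directly to the leading term, obtaining $\sigma(x)[P_n(x)]' = a(x;n)P_n(x) + b(x;n)P_{n-1}(x)$. The term $\Lambda_n^c[P_{n-1}(x)]'$ is more delicate: applying the structure relation at index $n-1$ yields $\sigma(x)[P_{n-1}(x)]' = a(x;n-1)P_{n-1}(x) + b(x;n-1)P_{n-2}(x)$, which introduces $P_{n-2}(x)$. To express everything in the two-element basis $\{P_n, P_{n-1}\}$, I would invoke the three term recurrence relation (\ref{[S1]-3TRR-Pn}) to solve for the unwanted lower-index polynomial, namely $P_{n-2}(x) = \gamma_{n-1}^{-1}\big((x-\beta_{n-1})P_{n-1}(x) - P_n(x)\big)$. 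Substituting this back replaces $P_{n-2}$ by a linear combination of $P_n$ and $P_{n-1}$, so that after multiplying through by $\sigma(x)^{-1}$ the whole expression lives in the desired two-dimensional span.

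The main obstacle — really just bookkeeping rather than a conceptual difficulty — is collecting the coefficients of $P_n(x)$ and $P_{n-1}(x)$ correctly after these substitutions and verifying they match $C_1(x;n)$ and $D_1(x;n)$ as displayed in (\ref{[S3]-Coefs-ABCD1}). The coefficient of $P_n(x)$ comes from the $a(x;n)$ term in $[P_n]'$ together with the $-\Lambda_n^c\gamma_{n-1}^{-1}b(x;n-1)$ contribution arising from substituting $P_{n-2}$; the coefficient of $P_{n-1}(x)$ collects $b(x;n)$ from $[P_n]'$ and the terms $\Lambda_n^c a(x;n-1)$ and $\Lambda_n^c\gamma_{n-1}^{-1}(x-\beta_{n-1})b(x;n-1)$ from $\Lambda_n^c[P_{n-1}]'$. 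One should pay attention to a possible typographical mismatch between the $b(x;n)$ appearing in $C_1(x;n)$ in the lemma statement (\ref{[S3]-Coefs-ABCD1}) and the $b(x;n-1)$ appearing in the corresponding $C_1(x;n)$ in Theorem \ref{[S2]-THEO-4}; the substitution of $P_{n-2}$ via the recurrence produces the index $n-1$, so I would trust the derivation and confirm which form is correct. Once the coefficients are matched, dividing by $\sigma(x)$ completes the proof, and no positivity or analytic estimates are required here since the entire argument is purely algebraic.
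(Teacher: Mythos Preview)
Your proposal is correct and follows essentially the same route as the paper: differentiate the connection formula (\ref{[S2]-ConnForm-1}), apply the structure relation (\ref{[S2]-StructRelation}) at indices $n$ and $n-1$, eliminate $P_{n-2}$ via the three term recurrence (\ref{[S1]-3TRR-Pn}), and collect coefficients. Your remark on the index discrepancy is also accurate: the derivation yields $b(x;n-1)$ in $C_1(x;n)$, in agreement with Theorem~\ref{[S2]-THEO-4}, so the $b(x;n)$ in the lemma statement is indeed a typo.
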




\begin{proof}
Shifting the index in (\ref{[S2]-StructRelation}) as $n\rightarrow n-1$, and
using (\ref{[S1]-3TRR-Pn}) we obtain%
\begin{equation}
\lbrack P_{n-1}(x)]^{\prime }=\frac{-b(x;n-1)}{\sigma (x)\,\gamma _{n-1}}%
P_{n}(x)+\left( \frac{a(x;n-1)}{\sigma (x)}+\frac{b(x;n-1)(x-\beta _{n-1})}{%
\sigma (x)\,\gamma _{n-1}}\right) P_{n-1}(x).  \label{[S3]-Pnm1Der}
\end{equation}%
Next, taking $x$ derivative in both sides of (\ref{[S2]-ConnForm-1}), we get%
\begin{equation*}
\lbrack {Q_{n}^{c,N}(x)]}^{\prime }=[P_{n}(x)]^{\prime }+\Lambda
_{n}^{c}\,[P_{n-1}(x)]^{\prime }.
\end{equation*}%
Substituting (\ref{[S2]-StructRelation}) and (\ref{[S3]-Pnm1Der}) into the
above expression the Lemma follows.
\end{proof}



\begin{lemma}
\label{[S3]-LEMA-4}The sequences of monic polynomials $\{Q_{n}^{c,N}\}_{n%
\geq 0}$ and $\{P_{n}\}_{n\geq 0}$ are also related by%
\begin{eqnarray}
{Q_{n-1}^{c,N}(x)} &=&A_{2}(n)P_{n}(x)+B_{2}(x;n)P_{n-1}(x),
\label{[S3]-Qnm1-A2D2} \\
\lbrack Q_{n-1}^{c,N}(x)]^{\prime }
&=&C_{2}(x;n)P_{n}(x)+D_{2}(x;n)P_{n-1}(x),  \label{[S3]-DxQnm1-C2D2}
\end{eqnarray}%
where%
\begin{eqnarray}
A_{2}(n) &=&\dfrac{-\Lambda _{n}^{c}}{\gamma _{n-1}},  \notag \\
B_{2}(x;n) &=&\Lambda _{n-1}^{c}\left( \frac{1}{\Lambda _{n-1}^{c}}+\frac{%
(x-\beta _{n-1})}{\gamma _{n-1}}\right) ,  \notag \\
C_{2}(x;n) &=&-\dfrac{\Lambda _{n-1}^{c}}{\sigma (x)}\left( \frac{a(x;n)}{%
\gamma _{n-1}}+\frac{b(x;n-1)}{\gamma _{n-1}}\left( \frac{1}{\Lambda
_{n-1}^{c}}+\frac{(x-\beta _{n-1})}{\gamma _{n-1}}\right) \right)
\label{[S3]-Coefs-ABCD2} \\
D_{2}(x;n) &=&\dfrac{\Lambda _{n-1}^{c}}{\sigma (x)}\left[ \dfrac{\sigma
(x)-b(x;n)}{\gamma _{n-1}}\right. +b(x;n-1)\times  \notag \\
&&\quad \left. \left( \frac{a(x;n-1)}{b(x;n-1)}+\dfrac{(x-\beta _{n-1})}{%
\gamma _{n-1}}\right) \left( \dfrac{1}{\Lambda _{n-1}^{c}}+\dfrac{(x-\beta
_{n-1})}{\gamma _{n-1}}\right) \right]  \notag
\end{eqnarray}
\end{lemma}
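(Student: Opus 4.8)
The plan is to derive both identities purely algebraically, using only the connection formula of Proposition~\ref{[S1]-PROP-1}, the three term recurrence relation~(\ref{[S1]-3TRR-Pn}), and the structure relation~(\ref{[S2]-StructRelation}), in the same spirit as the proof of Lemma~\ref{[S3]-LEMA-3}. The two identities are handled in sequence: the first places $Q_{n-1}^{c,N}$ in the basis $\{P_{n},P_{n-1}\}$, and the second is obtained by differentiating the first.

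First I would establish~(\ref{[S3]-Qnm1-A2D2}). Shifting the index in the connection formula~(\ref{[S2]-ConnForm-1}) by $n\rightarrow n-1$ gives $Q_{n-1}^{c,N}(x)=P_{n-1}(x)+\Lambda_{n-1}^{c}P_{n-2}(x)$, which still involves $P_{n-2}$. To rewrite it in the basis $\{P_{n},P_{n-1}\}$, I would use the recurrence~(\ref{[S1]-3TRR-Pn}) with index $n-1$, namely $xP_{n-1}(x)=P_{n}(x)+\beta_{n-1}P_{n-1}(x)+\gamma_{n-1}P_{n-2}(x)$, and solve for $P_{n-2}$:
\begin{equation*}
P_{n-2}(x)=\frac{1}{\gamma_{n-1}}\left(-P_{n}(x)+(x-\beta_{n-1})P_{n-1}(x)\right).
\end{equation*}
Substituting this back and grouping the coefficients of $P_{n}(x)$ and $P_{n-1}(x)$ immediately yields the expressions for $A_{2}(n)$ and $B_{2}(x;n)$.

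To obtain~(\ref{[S3]-DxQnm1-C2D2}) I would differentiate the identity just proved. Since $A_{2}(n)$ is independent of $x$, this produces $A_{2}(n)[P_{n}(x)]^{\prime}+[B_{2}(x;n)]^{\prime}P_{n-1}(x)+B_{2}(x;n)[P_{n-1}(x)]^{\prime}$, where $[B_{2}(x;n)]^{\prime}=\Lambda_{n-1}^{c}/\gamma_{n-1}$ is a constant. I would then replace $[P_{n}(x)]^{\prime}$ via the structure relation~(\ref{[S2]-StructRelation}) and $[P_{n-1}(x)]^{\prime}$ via the expression~(\ref{[S3]-Pnm1Der}) already computed in the proof of Lemma~\ref{[S3]-LEMA-3}, both expanded in the basis $\{P_{n},P_{n-1}\}$. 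Collecting the coefficients of $P_{n}(x)$ and $P_{n-1}(x)$ and factoring out $\Lambda_{n-1}^{c}/\sigma(x)$ gives $C_{2}(x;n)$ and $D_{2}(x;n)$.

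The routine but delicate step is the final bookkeeping for $D_{2}(x;n)$: one must combine the constant contribution $\Lambda_{n-1}^{c}/\gamma_{n-1}$ coming from $[B_{2}]^{\prime}$, the term $A_{2}(n)\,b(x;n)/\sigma(x)$ arising from $[P_{n}]^{\prime}$, and the product of $B_{2}(x;n)$ with the $P_{n-1}$-coefficient of $[P_{n-1}]^{\prime}$, then verify that the first two collapse into $\tfrac{\Lambda_{n-1}^{c}}{\sigma(x)}\tfrac{\sigma(x)-b(x;n)}{\gamma_{n-1}}$ while the third factors exactly as $B_{2}$ does. This is the only place where a sign or a cancellation could slip, so I would carry it out term by term; the coefficient $C_{2}(x;n)$ then follows from a shorter, entirely analogous computation.
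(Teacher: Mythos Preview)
Your approach is correct and coincides with the paper's own argument, which is stated in one sentence as a direct consequence of (\ref{[S2]-ConnForm-1}), (\ref{[S1]-3TRR-Pn}), (\ref{[S2]-StructRelation}) and Lemma~\ref{[S3]-LEMA-3}. One small caution: carrying out your first step literally gives $A_{2}(n)=-\Lambda_{n-1}^{c}/\gamma_{n-1}$ rather than the printed $-\Lambda_{n}^{c}/\gamma_{n-1}$; this is a typographical slip in the statement, as the formula $\Delta(x;n)=B_{2}(x;n)+\Lambda_{n}^{c}\Lambda_{n-1}^{c}/\gamma_{n-1}$ in Theorem~\ref{[S2]-THEO-4} (obtained via Cramer's rule in Lemma~\ref{[S3]-LEMA-5}) confirms the index must be $n-1$.
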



\begin{proof}
The proof of (\ref{[S3]-Qnm1-A2D2}) and (\ref{[S3]-DxQnm1-C2D2}) is a
straightforward consequence of (\ref{[S2]-ConnForm-1}), (\ref%
{[S2]-StructRelation}), Lemma \ref{[S3]-LEMA-3}, and the three term
recurrence relation (\ref{[S1]-3TRR-Pn}) for the MOPS $\{P_{n}\}_{n\geq 0}$.
\end{proof}


\begin{remark}
\label{[S3]-REMK-1} Observe that the set of coefficients (\ref%
{[S3]-Coefs-ABCD1}) and (\ref{[S3]-Coefs-ABCD2}) can be given strictly in
terms of the following known quantities: the coefficient $\Lambda _{n}^{c}$\
in (\ref{[S2]-LambdacN}), the coefficients $\beta _{n-1}$, $\gamma _{n-1}$
of the three term recurrence relation (\ref{[S1]-3TRR-Pn})\ and $\sigma (x)$%
, $a(x;n)$, $b(x;n)$ of the structure relation (\ref{[S2]-StructRelation})
satisfied by $\{P_{n}\}_{n\geq 0}$.
\end{remark}


The following lemma shows the converse relation of (\ref{[S2]-ConnForm-1})--(%
\ref{[S3]-Qnm1-A2D2}) for the polynomials $P_{n}(x)$ and $P_{n-1}(x)$. That
is, we express these two consecutive polynomials of $\{P_{n}\}_{n\geq 0}$ in
terms of only two consecutive Geronimus perturbed polynomials of the MOPS $%
\{Q_{n}^{c,N}\}_{n\geq 0}$.


\begin{lemma}
\label{[S3]-LEMA-5}%
\begin{eqnarray}
P_{n}(x) &=&\frac{B_{2}(x;n)}{\Delta (x;n)}Q_{n}^{c,N}(x)-\frac{\Lambda
_{n}^{c}}{\Delta (x;n)}Q_{n-1}^{c,N}(x),  \label{[S3]-InvR-Pn} \\
P_{n-1}(x) &=&\frac{\Lambda _{n-1}^{c}}{\Delta (x;n)\,\gamma _{n-1}}%
Q_{n}^{c,N}(x)+\frac{1}{\Delta (x;n)}Q_{n-1}^{c,N}(x).
\label{[S3]-InvR-Pnm1}
\end{eqnarray}%
where%
\begin{equation*}
\Delta (x;n)=\frac{\Lambda _{n-1}^{c}}{\gamma _{n-1}}\left( x-\beta
_{n-1}+\Lambda _{n}^{c}+\frac{\gamma _{n-1}}{\Lambda _{n-1}^{c}}\right)
,\quad \deg \Delta (x;n)=1.
\end{equation*}
\end{lemma}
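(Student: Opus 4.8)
The plan is to recognize that Lemma~\ref{[S3]-LEMA-5} asks for nothing more than \emph{inverting} the linear system already established in Lemmas~\ref{[S3]-LEMA-3}--\ref{[S3]-LEMA-4}. Specifically, equation~(\ref{[S2]-ConnForm-1}) gives $Q_{n}^{c,N}=P_{n}+\Lambda_{n}^{c}P_{n-1}$, and equation~(\ref{[S3]-Qnm1-A2D2}) gives $Q_{n-1}^{c,N}=A_{2}(n)P_{n}+B_{2}(x;n)P_{n-1}$. These two relations express the pair $(Q_{n}^{c,N},Q_{n-1}^{c,N})$ linearly in terms of the pair $(P_{n},P_{n-1})$, with a $2\times 2$ coefficient matrix
\begin{equation*}
M(x;n)=\begin{pmatrix} 1 & \Lambda_{n}^{c} \\ A_{2}(n) & B_{2}(x;n) \end{pmatrix}.
\end{equation*}
So the first step is simply to assemble this matrix from the two earlier lemmas and compute its determinant.

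The second step is to evaluate the determinant $\det M(x;n)=B_{2}(x;n)-\Lambda_{n}^{c}A_{2}(n)$. Substituting $A_{2}(n)=-\Lambda_{n}^{c}/\gamma_{n-1}$ from Lemma~\ref{[S3]-LEMA-4} gives $\det M(x;n)=B_{2}(x;n)+\Lambda_{n}^{c}\Lambda_{n-1}^{c}/\gamma_{n-1}$, which is exactly the quantity called $\Delta(x;n)$ in Theorem~\ref{[S2]-THEO-4}. Expanding $B_{2}(x;n)=\Lambda_{n-1}^{c}\bigl(1/\Lambda_{n-1}^{c}+(x-\beta_{n-1})/\gamma_{n-1}\bigr)$ and collecting terms confirms the linear-in-$x$ closed form for $\Delta(x;n)$ stated in the lemma, and in particular shows $\deg\Delta(x;n)=1$, so the leading coefficient $\Lambda_{n-1}^{c}/\gamma_{n-1}$ is nonzero and $M$ is invertible for all but at most one value of $x$.

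The third step is to invert by Cramer's rule. Writing the inverse as $M^{-1}=\tfrac{1}{\Delta(x;n)}\begin{pmatrix} B_{2}(x;n) & -\Lambda_{n}^{c} \\ -A_{2}(n) & 1 \end{pmatrix}$ and applying it to the vector $(Q_{n}^{c,N},Q_{n-1}^{c,N})^{\mathsf{T}}$ yields $P_{n}$ and $P_{n-1}$ immediately. The top row gives formula~(\ref{[S3]-InvR-Pn}) directly. For the bottom row, the entry $-A_{2}(n)=\Lambda_{n}^{c}/\gamma_{n-1}$ must be rewritten; one checks that substituting $A_{2}(n)=-\Lambda_{n}^{c}/\gamma_{n-1}$ produces the coefficient $\Lambda_{n-1}^{c}/(\Delta(x;n)\,\gamma_{n-1})$ appearing in~(\ref{[S3]-InvR-Pnm1}) --- here one has to be careful that the index on $\Lambda$ matches, and indeed the claimed coefficient carries $\Lambda_{n-1}^{c}$ rather than $\Lambda_{n}^{c}$, so a short reconciliation of indices is needed rather than a blind application of Cramer. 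I expect this index bookkeeping to be the only subtle point: the calculation is otherwise purely mechanical, and the genuine mathematical content (the determinant being a degree-one polynomial, hence a generically invertible transition matrix) was already prepared in the preceding lemmas. No positivity or orthogonality argument is required at this stage.
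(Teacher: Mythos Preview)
Your approach is exactly the paper's: interpret (\ref{[S2]-ConnForm-1}) and (\ref{[S3]-Qnm1-A2D2}) as a $2\times 2$ linear system in $P_{n},P_{n-1}$ and invert by Cramer's rule. Your index concern is well-founded --- the correct value is $A_{2}(n)=-\Lambda_{n-1}^{c}/\gamma_{n-1}$ (the paper's display of $A_{2}$ carries a typo), and with that value $-A_{2}(n)/\Delta(x;n)$ gives precisely the coefficient $\Lambda_{n-1}^{c}/(\Delta(x;n)\,\gamma_{n-1})$ in (\ref{[S3]-InvR-Pnm1}) with no further reconciliation needed.
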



\begin{proof}
Note that (\ref{[S2]-ConnForm-1})--(\ref{[S3]-Qnm1-A2D2}) can be interpreted
as a system of two linear equations with two polynomial unknowns, namely $%
P_{n}(x)$ and $P_{n-1}(x)$, hence from Cramer's rule the lemma follows.
\end{proof}


The proof of Theorem \ref{[S2]-THEO-4} easily follows from Lemmas \ref%
{[S3]-LEMA-3}, \ref{[S3]-LEMA-4} and \ref{[S3]-LEMA-5}. Replacing (\ref%
{[S3]-InvR-Pn})--(\ref{[S3]-InvR-Pnm1}) in (\ref{[S3]-DerQ-C1D1}) and (\ref%
{[S3]-DxQnm1-C2D2}) one obtains the ladder equations%
\begin{eqnarray*}
\lbrack Q_{n}^{c,N}(x)]^{\prime } &=&\frac{C_{1}(x;n)B_{2}(x;n)\,\gamma
_{n-1}+D_{1}(x;n)\Lambda _{n-1}^{c}}{\Delta (x;n)\,\gamma _{n-1}}%
Q_{n}^{c,N}(x) \\
&&\quad \quad \quad \quad \quad \quad \quad \quad +\frac{%
D_{1}(x;n)-C_{1}(x;n)\Lambda _{n}^{c}}{\Delta (x;n)}Q_{n-1}^{c,N}(x)
\end{eqnarray*}%
and%
\begin{eqnarray*}
\lbrack Q_{n-1}^{c,N}(x)]^{\prime } &=&\frac{C_{2}(x;n)B_{2}(x;n)\,\gamma
_{n-1}+D_{2}(x;n)\Lambda _{n-1}^{c}}{\Delta (x;n)\,\gamma _{n-1}}%
Q_{n}^{c,N}(x) \\
&&\quad \quad \quad \quad \quad \quad \quad \quad +\frac{%
D_{2}(x;n)-C_{2}(x;n)\Lambda _{n}^{c}}{\Delta (x;n)}Q_{n-1}^{c,N}(x),
\end{eqnarray*}%
which are fully equivalent to (\ref{[S2]-LoweringEq})--(\ref{[S2]-RaisingEq}%
). This completes the proof of Theorem \ref{[S2]-THEO-4}.

Next, the proof of Theorem \ref{[S2]-THEO-5} comes directly from the ladder
operators provided in Theorem \ref{[S2]-THEO-4}. The usual technique (see,
for example \cite[Th. 3.2.3]{Ism05}) consists in applying the raising
operator to both sides of the equation satistied by the lowering operator,
i.e.%
\begin{equation*}
\mathfrak{a}_{n}^{\dag }\left[ \frac{1}{\eta _{1}^{c}(x;n)}\mathfrak{a}%
_{n}[Q_{n}^{c,N}(x)]\right] =\mathfrak{a}_{n}^{\dag }\left[ Q_{n-1}^{c,N}(x)%
\right] ,
\end{equation*}%
which directly implies that%
\begin{equation*}
\mathfrak{a}_{n}^{\dag }\left[ \frac{1}{\eta _{1}^{c}(x;n)}\mathfrak{a}%
_{n}[Q_{n}^{c,N}(x)]\right] =\xi _{2}^{c}(x;n)\,Q_{n}^{c,N}(x)
\end{equation*}%
is a second order differential equation for $Q_{n}^{c,N}(x)$. After some
doable computations, Theorem \ref{[S2]-THEO-5} easily follows.


\section{Zero behavior and electrostatic model for some examples.}

\label{[SECTION-4]-Examples}


Once we have the second order differential equation satisfied by the MOPS $%
\{Q_{n}^{c,N}\}_{n\geq 0}$ it is easy to obtain an electrostatic model for
their zeros (see \cite[Ch. 3]{Ism05}, \cite{Ism00-B}, \cite{HMR-AMC12},
among others). In this Section we shall derive the electrostatic model for
the ceros in case $\mu $ is the Laguerre and the Jacobi classical measures.


\subsection{The Geronimus perturbed Laguerre case}


Let $\{{L_{n}^{\alpha }}${$\}$}$_{n\geq 0}$ be the monic Laguerre
polynomials orthogonal with respect to the Laguerre classical measure $d\mu
_{\alpha }(x)=x^{\alpha }e^{-x}dx$, $\alpha >-1$, supported on $[0,+{\infty }%
)$. We will denote by $\{Q_{n}^{\alpha ,c,N}\}_{n\geq 0}$\ and $%
\{Q_{n}^{\alpha ,c}\}_{n\geq 0}$\ the MOPS corresponding to (\ref%
{[S1]-GeronimusTr}) and (\ref{[S1]-LinearDiv}) when $\mu $ is the Laguerre
classical measure, and $\{y_{n,s}^{\alpha ,c,N}\}_{s=1}^{n}$, $%
\{y_{n,s}^{\alpha ,c}\}_{s=1}^{n}$ their corresponding zeros.

The structure relation (\ref{[S2]-StructRelation})\ for the monic classical
Laguerre polynomials is%
\begin{equation*}
\sigma (x)[L_{n}^{\alpha }(x)]^{\prime }=a(x;n)L_{n}^{\alpha
}(x)+b(x;n)L_{n-1}^{\alpha }(x),
\end{equation*}%
and therefore $\sigma (x)=x$, $a(x;n)=n$, and $b(x;n)=n(n+\alpha )$. Their
three term recurrence relation is%
\begin{equation*}
xL_{n}^{\alpha }(x)=L_{n+1}^{\alpha }(x)+\beta _{n}L_{n}^{\alpha }(x)+\gamma
_{n}L_{n-1}^{\alpha }(x),
\end{equation*}%
with $\beta _{n}=\beta _{n}^{\alpha }=2n+\alpha +1$, $\gamma _{n}=\gamma
_{n}^{\alpha }=n(n+\alpha )$, and the connection formula (\ref%
{[S2]-ConnForm-1}) for $Q_{n}^{\alpha ,c,N}(x)$ in terms of $\{L_{n}^{\alpha
}\}_{n\geq 0}$ reads%
\begin{equation*}
{Q_{n}^{\alpha ,c,N}(x)}=L_{n}^{\alpha }(x)+\Lambda _{n}^{\alpha
,c}\,L_{n-1}^{\alpha }(x).
\end{equation*}%
Taking into account exclusively the coefficients in the above three
expresions, from Theorems \ref{[S2]-THEO-4}\ and \ref{[S2]-THEO-5} we obtain
the explicit expresions for the ladder operators and the coefficients in the
holonomic equation for this first example. After some cumbersome
computations, we get the following set of coefficients (\ref%
{[S3]-Coefs-ABCD1})--(\ref{[S3]-Coefs-ABCD2}) for $\Lambda _{n}^{\alpha
,c}=\Lambda _{n}^{\alpha ,c}(N)$ in (\ref{[S2]-ConnForm-1})%
\begin{eqnarray*}
C_{1}^{\alpha }(x;n) &=&\frac{n-\Lambda _{n}^{\alpha ,c}}{x}, \\
D_{1}^{\alpha }(x;n) &=&\frac{n(n+\alpha )+(x-(n+\alpha ))\Lambda
_{n}^{\alpha ,c}}{x}, \\
A_{2}^{\alpha }(x;n) &=&\frac{-\Lambda _{n}^{\alpha ,c}}{(n-1)(n+\alpha -1)},
\\
B_{2}^{\alpha }(x;n) &=&1+\Lambda _{n-1}^{\alpha ,c}\frac{(x+1-2n+\alpha )}{%
(n-1)(n+\alpha -1)}, \\
C_{2}^{\alpha }(x;n) &=&\frac{-1}{x}-\Lambda _{n-1}^{\alpha ,c}\frac{%
x+1-(n+\alpha )}{x(n-1)(n-1+\alpha )}, \\
D_{2}^{\alpha }(x;n) &=&\frac{x-(n+\alpha )}{x}+\Lambda _{n-1}^{\alpha ,c}%
\frac{(x+1-2n+\alpha )(x-(n+\alpha ))+(x-n(n+\alpha ))}{x(n-1)(n-1+\alpha )}.
\end{eqnarray*}%
Hence, they satisfy the holonomic equation%
\begin{equation}
\lbrack Q_{n}^{\alpha ,c,N}(x)]^{\prime \prime }+\mathcal{R}%
_{L}(x;n)[Q_{n}^{\alpha ,c,N}(x)]^{\prime }+\mathcal{S}_{L}(x;n)Q_{n}^{%
\alpha ,c,N}(x)=0,  \label{[S4]-2ndODELag}
\end{equation}%
with coefficients%
\begin{eqnarray*}
\mathcal{R}_{L}(x;n) &=&-\frac{\Lambda _{n}^{\alpha ,c}}{\Lambda
_{n}^{\alpha ,c}x+\left( n-\Lambda _{n}^{\alpha ,c}\right) \left( n+\alpha
-\Lambda _{n}^{\alpha ,c}\right) }+\frac{\alpha +1}{x}-1, \\
\mathcal{S}_{L}(x;n) &=&\frac{\Lambda _{n}^{\alpha ,c}x+\left( n+\alpha
\right) \left( n-\Lambda _{n}^{\alpha ,c}\right) }{x\left( \Lambda
_{n}^{\alpha ,c}x+\left( n-\Lambda _{n}^{\alpha ,c}\right) \left( n+\alpha
-\Lambda _{n}^{\alpha ,c}\right) \right) }+\frac{n-1}{x}.
\end{eqnarray*}

Now we evaluate (\ref{[S4]-2ndODELag}) at the zeros $\{y_{n,s}^{\alpha
,c,N}\}_{s=1}^{n}$, yielding%
\begin{eqnarray*}
\frac{\lbrack Q_{n}^{\alpha ,c,N}(y_{n,s}^{\alpha ,c,N})]^{\prime \prime }}{%
[Q_{n}^{\alpha ,c,N}(y_{n,s}^{\alpha ,c,N})]^{\prime }} &=&-\mathcal{R}%
_{L}(y_{n,s}^{\alpha ,c,N};n) \\
&=&\frac{\Lambda _{n}^{\alpha ,c}}{\Lambda _{n}^{\alpha ,c}y_{n,s}^{\alpha
,c,N}+\left( n-\Lambda _{n}^{\alpha ,c}\right) \left( n+\alpha -\Lambda
_{n}^{\alpha ,c}\right) }-\frac{\alpha +1}{y_{n,s}^{\alpha ,c,N}}+1.
\end{eqnarray*}%
The above equation reads as the electrostatic equilibrium condition for $%
\{y_{n,s}^{\alpha ,c,N}\}_{s=1}^{n}$. Taking $u_{L}(n;x)=\Lambda
_{n}^{\alpha ,c}x+\left( n-\Lambda _{n}^{\alpha ,c}\right) \left( n+\alpha
-\Lambda _{n}^{\alpha ,c}\right) $, the above condition can be rewritten as
(see \cite{G-JCAM98}, \cite{Ism05}, or \cite{Ism00-B}\ for a treatment of
more general cases and other examples)%
\begin{equation*}
\sum_{j=1,\,j\neq k}^{n}\frac{1}{y_{n,j}^{\alpha ,c,N}-y_{n,k}^{\alpha ,c,N}}%
+\frac{1}{2}\frac{[u_{L}]^{\prime }(n;y_{n,k}^{\alpha ,c,N})}{%
u_{L}(n;y_{n,k}^{\alpha ,c,N})}-\frac{1}{2}\frac{\alpha +1}{y_{n,s}^{\alpha
,c,N}}+\frac{1}{2}=0
\end{equation*}%
which means that the set of zeros $\{y_{n,s}^{\alpha ,c,N}\}_{s=1}^{n}$ are
the critical points of the gradient of the total energy. Hence, the
electrostatic interpretation of the distribution of zeros means that we have
an equilibrium position under the presence of an external potential%
\begin{equation}
V_{L}^{ext}(x)=\frac{1}{2}\text{ln }u_{L}(x;n)-\frac{1}{2}\text{ln }%
x^{\alpha +1}e^{-x},  \label{[S4]-VextLag}
\end{equation}%
where the first term represents a \textit{short range potential}
corresponding to a unit charge located at the unique real zero%
\begin{equation*}
z_{L}(n;x)=\frac{-1}{\Lambda _{n}^{\alpha ,c}}\left( n-\Lambda _{n}^{\alpha
,c}\right) \left( n+\alpha -\Lambda _{n}^{\alpha ,c}\right) 
\end{equation*}%
of the linear polynomial $u_{L}(x;n)$, and the second one is a \textit{long
range potential\ }associated with the Laguerre weight function.

Finally, in order to illustrate the results of Theorem \ref{[S2]-THEO-2}, we
consider the Geronimus perturbation on the Laguerre measure with $\alpha =0$
and $c=-1$%
\begin{equation}
d\nu _{N}(x)=\frac{1}{(x+1)}e^{-x}dx+N\delta (x+1),\quad N\geq 0,
\label{[S4]-Ger-Lag-measure}
\end{equation}%
and we obtain the behavior of the zeros $\{y_{n,s}^{0,-1,N}\}_{s=1}^{n}$ as $%
N$ increases. We enclose in Figure \ref{[S4]-FigLag} the graphs of ${%
L_{3}^{0}(x)}$ (dotted line), $Q_{3}^{0,-1}(x)$ (dash-dotted line), and $%
Q_{3}^{0,-1,N}(x)$ for some $N$, to show the monotonicity of their zeros as
a function of the mass $N$. 
\begin{figure}[th]
\centerline{\includegraphics[width=11cm,keepaspectratio]{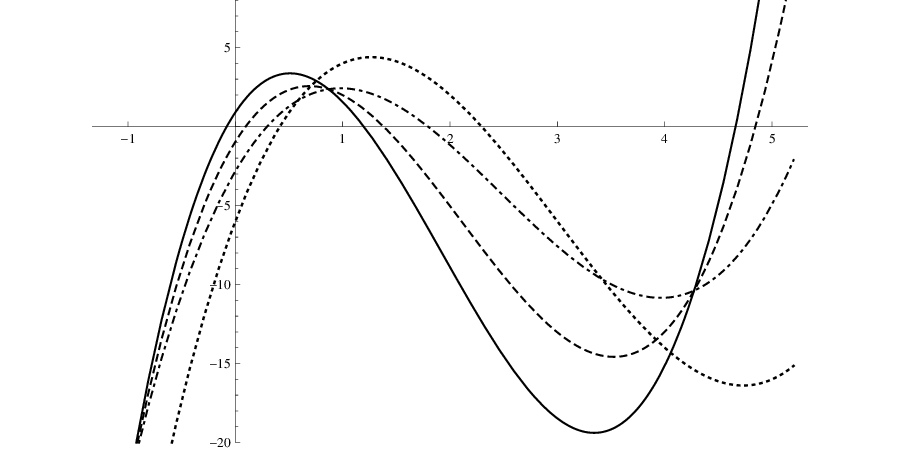}}
\caption{The graphs of $L_{3}^{0}(x)$ (dotted) and $Q_{3}^{0,-1,N}(x)$ for
some values of $N$.}
\label{[S4]-FigLag}
\end{figure}
Table \ref{[S4]-TabLag} shows the behavior of the zeros of $%
Q_{3}^{0,-1,N}(x) $ for several choices of $N$. Observe that the smallest
zero converges to $c=-1$ and the other two zeros converge to the zeros of
the monic kernel polynomial $L_{2}^{0 ,-1,[1]}(x)$, as is shown in Theorem %
\ref{[S2]-THEO-2}. That is, they converge to $x_{2,1}^{0,-1,[1]}=0.869089$
and $x_{2,2}^{0,-1,[1]}=4.273768$. Notice that all the zeros decrease as $N$
increases. The zeros outside the interval $[0,+\infty )$, namely the support
of the classical Laguerre measure, appear in bold.

\begin{table}[th]
\centering{\small \renewcommand{\arraystretch}{1.5}%
\begin{tabular}{|ccccccccc|}
\hline
\emph{N} &  & \emph{1st} &  & \emph{2nd} &  & \emph{3rd} &  & \emph{z(N)} \\ 
\hline
\multicolumn{1}{|r}{$0$} & \multicolumn{1}{r}{} & \multicolumn{1}{r}{$%
0.296771$} & \multicolumn{1}{r}{} & \multicolumn{1}{r}{$1.794881$} & 
\multicolumn{1}{r}{} & \multicolumn{1}{r}{$5.327153$} & \multicolumn{1}{r}{}
& \multicolumn{1}{r|}{$-1.27309$} \\ 
\multicolumn{1}{|r}{$0.0125$} & \multicolumn{1}{r}{} & \multicolumn{1}{r}{$%
0.096936$} & \multicolumn{1}{r}{} & \multicolumn{1}{r}{$1.381317$} & 
\multicolumn{1}{r}{} & \multicolumn{1}{r}{$4.846199$} & \multicolumn{1}{r}{}
& \multicolumn{1}{r|}{$-0.039345$} \\ 
\multicolumn{1}{|r}{$0.025$} & \multicolumn{1}{r}{} & \multicolumn{1}{r}{$%
\mathbf{-0.079531}$} & \multicolumn{1}{r}{} & \multicolumn{1}{r}{$1.196907$}
& \multicolumn{1}{r}{} & \multicolumn{1}{r}{$4.66079$} & \multicolumn{1}{r}{}
& \multicolumn{1}{r|}{$-0.015274$} \\ 
\multicolumn{1}{|r}{$0.05$} & \multicolumn{1}{r}{} & \multicolumn{1}{r}{$%
\mathbf{-0.324373}$} & \multicolumn{1}{r}{} & \multicolumn{1}{r}{$1.050055$}
& \multicolumn{1}{r}{} & \multicolumn{1}{r}{$4.50679$} & \multicolumn{1}{r}{}
& \multicolumn{1}{r|}{$-0.156362$} \\ 
\multicolumn{1}{|r}{$5$} & \multicolumn{1}{r}{} & \multicolumn{1}{r}{$%
\mathbf{-0.988481}$} & \multicolumn{1}{r}{} & \multicolumn{1}{r}{$0.87094$}
& \multicolumn{1}{r}{} & \multicolumn{1}{r}{$4.276644$} & \multicolumn{1}{r}{
} & \multicolumn{1}{r|}{$-0.700057$} \\ \hline
\end{tabular}%
}
\caption{Zeros of $Q_{3}^{0,-1,N}(x)$ and $z(0,-1,3,N;x)$ for some values of 
$N$.}
\label{[S4]-TabLag}
\end{table}


\begin{remark}
Looking at the external potential (\ref{[S4]-VextLag}) there are few
significant differences with respect to the Uvarov case (see \cite[Sec. 4.2]%
{HMR-AMC12}). First, the long range potential does not depend on the shift $%
c $, as occurs in the Uvarov case, where the long range potential
corresponds to a polynomial perturbation of the Laguerre measure by $(x-c)$.
Second, in the Uvarov case the polynomial $u_{L}(x;n)$ has two different
real roots when $c<0$, away from the boundary $[0,+\infty )$, meanwhile in
this case there exists only one real root for $u_{L}(x;n)$. It means that
the inclusion of the linear rational modification of the measure present in
the Geronimus transformation has notable dynamical consequences on the
electrostatic model.
\end{remark}



\subsection{The Geronimus perturbed Jacobi case}


The electrostatic model in case $\mu $\ is the Jacobi classical measure is
essentially the same as in the former case, but considering the
corresponding values and expressions for the Jacobi measure, and the shift $%
c $ away of its support. Since the exact formulas are cumbersome, we will
not write them all down, except the most significant ones.

Let $\{{P_{n}^{\alpha ,\beta }}${$\}$}$_{n\geq 0}$ be the monic Jacobi
polynomials orthogonal with respect to the Jacobi classical measure $d\mu
_{\alpha ,\beta }(x)=(1-x)^{\alpha }(1+x)^{\beta }dx$, $\alpha ,\beta >-1$,
supported on $[-1,1]$. We will denote by $\{Q_{n}^{\alpha
,\beta,c,N}\}_{n\geq 0}$\ and $\{Q_{n}^{\alpha ,\beta ,c}\}_{n\geq 0}$\ the
MOPS corresponding to (\ref{[S1]-GeronimusTr}) and (\ref{[S1]-LinearDiv})
when $\mu $ is the Jacobi measure, and $\{y_{n,s}^{\alpha ,\beta
,c,N}\}_{s=1}^{n}$, $\{y_{n,s}^{\alpha ,\beta ,c}\}_{s=1}^{n}$ their
corresponding zeros.

The structure relation for the monic Jacobi polynomials reads%
\begin{equation*}
\sigma (x)[P_{n}^{\alpha ,\beta }(x)]^{\prime }=a(x;n)P_{n}^{\alpha ,\beta
}(x)+b(x;n)P_{n-1}^{\alpha ,\beta }(x),
\end{equation*}%
with%
\begin{eqnarray*}
\sigma (x) &=&(1-x^{2}), \\
a(x;n) &=&-n(1+x)+\frac{2n(n+\alpha )}{(2n+\alpha +\beta )}, \\
b(x;n) &=&\frac{4n(n+\alpha )(n+\beta )(n+\alpha +\beta )}{(2n+\alpha +\beta
+1)(2n+\alpha +\beta )^{2}}.
\end{eqnarray*}%
Their three term recurrence relation is%
\begin{equation*}
xP_{n}^{\alpha ,\beta }(x)=P_{n+1}^{\alpha ,\beta }(x)+\beta
_{n}P_{n}^{\alpha ,\beta }(x)+\gamma _{n}P_{n-1}^{\alpha ,\beta }(x),
\end{equation*}%
with%
\begin{eqnarray*}
\beta _{n} &=&\beta _{n}^{\alpha ,\beta }=\frac{\beta ^{2}-\alpha ^{2}}{%
(2n+\alpha +\beta )(2n+\alpha +\beta +2)}, \\
\gamma _{n} &=&\gamma _{n}^{\alpha ,\beta }=\frac{4n(n+\alpha )(n+\beta
)(n+\alpha +\beta )}{(2n+\alpha +\beta -1)(2n+\alpha +\beta )^{2}(2n+\alpha
+\beta +1)},
\end{eqnarray*}%
and the connection formula (\ref{[S2]-ConnForm-1}) for $Q_{n}^{\alpha
,c,N}(x)$ in terms of $\{P_{n}^{\alpha ,\beta }\}_{n\geq 0}$ is%
\begin{equation*}
{Q_{n}^{\alpha ,\beta ,c,N}(x)}=P_{n}^{\alpha ,\beta }(x)+\Lambda
_{n}^{\alpha ,\beta ,c}\,P_{n-1}^{\alpha ,\beta }(x).
\end{equation*}%
The coefficient of $[Q_{n}^{\alpha ,\beta ,c,N}(x)]^{\prime }$ in the
holonomic equation is%
\begin{equation*}
\mathcal{R}_{L}(x;n)=-\frac{[u_{J}]^{\prime }(n;x)}{u_{J}(n;x)}-\frac{%
2x-\beta \left( 1-x\right) +\alpha \left( 1+x\right) }{(1-x)(1+x)},
\end{equation*}%
with%
\begin{eqnarray}
u_{J}(n;x) &=&4n(n+\alpha )(n+\beta )(n+\alpha +\beta )+(2n+\alpha +\beta
-1)(2n+\alpha +\beta )\Lambda _{n}^{\alpha ,\beta ,c}\cdot 
\label{[S4]-unJac} \\
&&\left[ \left( 2n+\alpha +\beta \right) ^{2}x+\left( \alpha +\beta \right)
\left( \alpha -\beta \right) +(2n+\alpha +\beta -1)(2n+\alpha +\beta
)\Lambda _{n}^{\alpha ,\beta ,c}\right] .  \notag
\end{eqnarray}%
Hence, the electrostatic equilibrium means that the set of zeros $%
\{y_{n,s}^{\alpha ,\beta ,c,N}\}_{s=1}^{n}$ have an equilibrium position
under the presence of the external potential%
\begin{equation*}
V_{J}^{ext}(x)=\frac{1}{2}\text{ln }u_{J}(x;n)-\frac{1}{2}\text{ln }%
(1-x)^{\alpha +1}(1+x)^{\beta +1},
\end{equation*}%
where the first term represents a \textit{short range potential}
corresponding to a unit charge located at the real root%
\begin{eqnarray*}
z_{J}(x;n) &=&-\frac{(\alpha ^{2}-\beta ^{2})(2n+\alpha +\beta )+\frac{%
4n(n+\alpha )(n+\beta )(n+\alpha +\beta )}{(2n+\alpha +\beta -1)\Lambda
_{n}^{\alpha ,\beta ,c}}}{(2n+\alpha +\beta )^{3}} \\
&&\qquad -\frac{(2n+\alpha +\beta -1)(2n+\alpha +\beta )^{2}\Lambda
_{n}^{\alpha ,\beta ,c}}{(2n+\alpha +\beta )^{3}}
\end{eqnarray*}%
of (\ref{[S4]-unJac}), and the second one is a \textit{long range potential\ 
}associated with the Jacobi weight function. Observe that, as in the
Laguerre case, the long range potential does not depend on the shift $c$.

Finally, we analyze the consequences of Theorem \ref{[S2]-THEO-2} and \ref%
{[S2]-THEO-3}), for a Geronimus perturbation on the Laguerre measure with $%
\alpha =0.5$, $\beta =1$ and $c=-1.5$%
\begin{equation*}
d\nu _{N}(x)=\frac{(1-x)^{0.5}(1+x)}{(x+1.5)}dx+N\delta (x+1.5),\quad N\geq
0,
\end{equation*}%
and we obtain the behavior of the zeros $\{y_{n,s}^{0.5,1,-1.5,N}%
\}_{s=1}^{n} $ as $N$ increases. We provide in Figure \ref{[S4]-FigLag} the
graphs of ${P_{4}^{0.5,1}(x)}$ (dotted line), $Q_{4}^{0.5,1,-1}(x)$
(dash-dotted line), and $Q_{4}^{0.5,1,-1.5,N}(x)$ for some $N$, to show the
monotonicity of their zeros as a function of the mass $N$ (See Figure \ref%
{[S4]-FigJac}). 
\begin{figure}[th]
\centerline{\includegraphics[width=11cm,keepaspectratio]{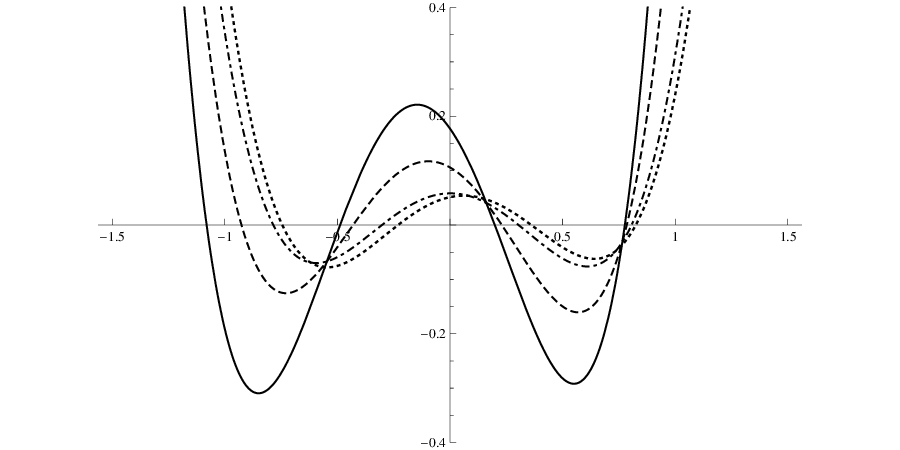}}
\caption{The graphs of $P_{4}^{0.5,1}(x)$ (dotted) and $%
Q_{4}^{0.5,1,-1.5,N}(x)$ for some values of $N$.}
\label{[S4]-FigJac}
\end{figure}
Table \ref{[S4]-TabJac} shows the zeros $\{y_{4,s}^{\alpha ,\beta
,c,N}\}_{s=1}^{4}$ of $Q_{4}^{\alpha ,\beta ,c,N}(x)$ having $\alpha =0.5$, $%
\beta =1$, and $c=-1.5$ for several values of $N$. Observe that the smallest
zero $y_{4,1}^{0.5,1,-1.5,N}$ converges to $c=-1.5$ and the other three
zeros converge to the zeros $\{x_{3,s}^{0.5,1,-1.5,[1]}\}_{s=1}^{3}$ of the
kernel polynomial $P_{3}^{0.5,1,-1.5,[1]}(x)$, as states Theorem \ref%
{[S2]-THEO-2}. That is, they converge respectively to $%
x_{3,1}^{0.5,1,-1.5,[1]}=-0.546629$, $x_{3,2}^{0.5,1,-1.5,[1]}=0.161665$,
and $x_{3,3}^{0.5,1,-1.5,[1]}=0.765232$. The zeros outside the interval $%
[-1,1]$, namely the support of the classical Jacobi measure, appear in bold.

\begin{table}[th]
\centering{\small \renewcommand{\arraystretch}{1.5}%
\begin{tabular}{|ccccccccccc|}
\hline
\emph{N} &  & \emph{1st} &  & \emph{2nd} &  & \emph{3rd} &  & \emph{4rd} & 
& \emph{z(N)} \\ \hline
\multicolumn{1}{|r}{$0$} & \multicolumn{1}{r}{} & \multicolumn{1}{r}{$%
-0.784545$} & \multicolumn{1}{r}{} & \multicolumn{1}{r}{$-0.302212$} & 
\multicolumn{1}{r}{} & \multicolumn{1}{r}{$0.304654$} & \multicolumn{1}{r}{}
& \multicolumn{1}{r}{$0.806277$} & \multicolumn{1}{r}{} & 
\multicolumn{1}{r|}{$-1.61637$} \\ 
\multicolumn{1}{|r}{$0.0008$} & \multicolumn{1}{r}{} & \multicolumn{1}{r}{$%
-0.925906$} & \multicolumn{1}{r}{} & \multicolumn{1}{r}{$-0.430453$} & 
\multicolumn{1}{r}{} & \multicolumn{1}{r}{$0.230271$} & \multicolumn{1}{r}{}
& \multicolumn{1}{r}{$0.784909$} & \multicolumn{1}{r}{} & 
\multicolumn{1}{r|}{$-0.97778$} \\ 
\multicolumn{1}{|r}{$0.0020$} & \multicolumn{1}{r}{} & \multicolumn{1}{r}{$%
\mathbf{-1.080633}$} & \multicolumn{1}{r}{} & \multicolumn{1}{r}{$-0.488136$}
& \multicolumn{1}{r}{} & \multicolumn{1}{r}{$0.199190$} & \multicolumn{1}{r}{
} & \multicolumn{1}{r}{$0.776221$} & \multicolumn{1}{r}{} & 
\multicolumn{1}{r|}{$-1.04893$} \\ 
\multicolumn{1}{|r}{$0.05$} & \multicolumn{1}{r}{} & \multicolumn{1}{r}{$%
\mathbf{-1.467364}$} & \multicolumn{1}{r}{} & \multicolumn{1}{r}{$-0.544057$}
& \multicolumn{1}{r}{} & \multicolumn{1}{r}{$0.163585$} & \multicolumn{1}{r}{
} & \multicolumn{1}{r}{$0.765818$} & \multicolumn{1}{r}{} & 
\multicolumn{1}{r|}{$-1.35837$} \\ 
\multicolumn{1}{|r}{$5$} & \multicolumn{1}{r}{} & \multicolumn{1}{r}{$%
\mathbf{-1.499661}$} & \multicolumn{1}{r}{} & \multicolumn{1}{r}{$-0.546604$}
& \multicolumn{1}{r}{} & \multicolumn{1}{r}{$0.161684$} & \multicolumn{1}{r}{
} & \multicolumn{1}{r}{$0.765238$} & \multicolumn{1}{r}{} & 
\multicolumn{1}{r|}{$-1.38587$} \\ \hline
\end{tabular}%
}
\caption{Zeros of $Q_{4}^{0.5,1,-1.5,N}(x)$ and $z_{J}(x;n)$ for some values
of $N$.}
\label{[S4]-TabJac}
\end{table}


\section{Appendix. The interlacing lemma}

\label{[SECTION-5]-Appendix}


Next, we will analyze the behavior of zeros of polynomial of the form $%
f(x)=h_{n}(x)+cg_{n}(x)$. We need the following lemma concerning the
behavior and the asymptotics of the zeros of linear combinations of two
polynomials with interlacing zeros (see \cite[Lemma 1]{BDR-JCAM02}, \cite[%
Lemma 3]{DMR-ANM10} for a detailed discussion).

\begin{lemma}
\label{InterlacingLemma} Let $h_{n}(x)=a(x-x_{1})\cdots (x-x_{n})$ and $%
g_{n}(x)=b(x-\zeta _{1})\cdots (x-\zeta _{n})$ be polynomials with real and
simple zeros, where $a$ and $b$ are real positive constants.

\begin{itemize}
\item[$(i)$] If%
\begin{equation*}
\zeta _{1}<x_{1}<\cdots <\zeta _{n}<x_{n},
\end{equation*}%
then, for any real constant $c>0$, the polynomial 
\begin{equation*}
f(x)=h_{n}(x)+cg_{n}(x)
\end{equation*}%
has $n$ real zeros $\eta _{1}<\cdots <\eta _{n}$ which interlace with the
zeros of $h_{n}(x)$ and $g_{n}(x)$ in the following way 
\begin{equation*}
\zeta _{1}<\eta _{1}<x_{1}<\cdots <\zeta _{n}<\eta _{n}<x_{n}.
\end{equation*}%
Moreover, each $\eta _{k}=\eta _{k}(c)$ is a decreasing function of $c$ and,
for each $k=1,\ldots ,n$, 
\begin{equation*}
\lim_{c\rightarrow \infty }\eta _{k}=\zeta _{k}\quad \text{and}\quad
\lim_{c\rightarrow \infty }c[\eta _{k}-\zeta _{k}]=\dfrac{-h_{n}(\zeta _{k})%
}{g_{n}^{\prime }(\zeta _{k})}.
\end{equation*}

\item[$(ii)$] If%
\begin{equation*}
x_{1}<\zeta _{1}<\cdots <x_{n}<\zeta _{n},
\end{equation*}%
then, for any positive real constant $c>0$, the polynomial 
\begin{equation*}
f(x)=h_{n}(x)+cg_{n}(x)
\end{equation*}%
has $n$ real zeros $\eta _{1}<\cdots <\eta _{n}$ which interlace with the
zeros of $h_{n}(x)$ and $g_{n}(x)$ as follows 
\begin{equation*}
x_{1}<\eta _{1}<\zeta _{1}<\cdots <x_{n}<\eta _{n}<\zeta _{n}.
\end{equation*}%
Moreover, each $\eta _{k}=\eta _{k}(c)$ is an increasing function of $c$
and, for each $k=1,\ldots ,n$, 
\begin{equation*}
\lim_{c\rightarrow \infty }\eta _{k}=\zeta _{k}\quad \text{and}\quad
\lim_{c\rightarrow \infty }c[\zeta _{k}-\eta _{k}]=\dfrac{h_{n}(\zeta _{k})}{%
g_{n}^{\prime }(\zeta _{k})}.
\end{equation*}
\end{itemize}
\end{lemma}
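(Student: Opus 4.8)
The plan is to prove part $(i)$ in full and then deduce part $(ii)$ from it by the reflection $x\mapsto -y$: setting $\hat h_n(y)=(-1)^nh_n(-y)$ and $\hat g_n(y)=(-1)^ng_n(-y)$ produces polynomials of exactly the same form (positive leading coefficients $a,b$, simple zeros), the interlacing pattern $x_1<\zeta_1<\cdots$ becomes $\hat\zeta_1<\hat x_1<\cdots$ after relabelling zeros in increasing order, and $h_n(x)+c\,g_n(x)$ is carried to a constant multiple of $\hat h_n(y)+c\,\hat g_n(y)$; translating the conclusions of $(i)$ back yields $(ii)$. For part $(i)$ itself the argument splits into three independent pieces: a sign count to locate the zeros and prove interlacing, an implicit differentiation for monotonicity in $c$, and a difference-quotient limit to extract both $\lim\eta_k=\zeta_k$ and the first-order rate.

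First I would locate the zeros. Since $h_n$ and $g_n$ have positive leading coefficients and simple real zeros, the sign of either polynomial at a point is determined by counting how many of its zeros lie to the right of that point. The hypothesis places $\zeta_k$ in $(x_{k-1},x_k)$, so $h_n(\zeta_k)$ has sign $(-1)^{n-k+1}$; likewise $x_k\in(\zeta_k,\zeta_{k+1})$ gives $g_n(x_k)$ the sign $(-1)^{n-k}$. Because $f(\zeta_k)=h_n(\zeta_k)$ and $f(x_k)=c\,g_n(x_k)$ with $c>0$, these two values have opposite signs, so the intermediate value theorem yields a zero $\eta_k$ of $f$ in each of the $n$ disjoint intervals $(\zeta_k,x_k)$. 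As $f$ has degree $n$ with positive leading coefficient $a+cb$, these are all its zeros, which gives the asserted interlacing $\zeta_k<\eta_k<x_k$.

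Next, differentiating $h_n(\eta_k)+c\,g_n(\eta_k)=0$ in $c$ gives $\eta_k'(c)=-g_n(\eta_k)/f'(\eta_k)$. Here $g_n(\eta_k)$ has sign $(-1)^{n-k}$ (same count as at $x_k$, since $\eta_k\in(\zeta_k,\zeta_{k+1})$), while the sign change of $f$ across $(\zeta_k,x_k)$ forces $\mathrm{sign}\,f'(\eta_k)=(-1)^{n-k}$; hence $\eta_k'(c)<0$ and each $\eta_k$ is decreasing in $c$. For the limit, monotonicity together with the lower bound $\eta_k>\zeta_k$ yields convergence to some $L\in[\zeta_k,x_k]$; from $g_n(\eta_k)=-h_n(\eta_k)/c\to0$ we get $g_n(L)=0$, and since $\zeta_k$ is the only zero of $g_n$ in $[\zeta_k,x_k]$ we conclude $L=\zeta_k$. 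Finally, solving $c=-h_n(\eta_k)/g_n(\eta_k)$ and using $g_n(\zeta_k)=0$,
\[
c[\eta_k-\zeta_k]=-h_n(\eta_k)\,\frac{\eta_k-\zeta_k}{g_n(\eta_k)-g_n(\zeta_k)}\longrightarrow\frac{-h_n(\zeta_k)}{g_n'(\zeta_k)},
\]
since $(g_n(\eta_k)-g_n(\zeta_k))/(\eta_k-\zeta_k)\to g_n'(\zeta_k)$ and $h_n(\eta_k)\to h_n(\zeta_k)$.

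The genuinely routine-but-error-prone step is the sign bookkeeping: every conclusion (the interlacing, the direction of monotonicity, and the sign in the rate) rests on the parities $(-1)^{n-k}$ and $(-1)^{n-k+1}$, so the main care is to count zeros-to-the-right consistently and to match the sign of $f'(\eta_k)$ with the observed sign change of $f$ on $(\zeta_k,x_k)$. No single step is conceptually deep; the only analytic subtlety is recognizing $g_n(\eta_k)/(\eta_k-\zeta_k)$ as a difference quotient with limit $g_n'(\zeta_k)$, which is precisely what upgrades the qualitative limit $\eta_k\to\zeta_k$ to the sharp first-order rate.
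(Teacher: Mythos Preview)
Your proof is correct and complete. Note, however, that the paper does not actually prove this lemma: it is stated in the Appendix as a tool borrowed from the literature, with the proof deferred to the cited references \cite{BDR-JCAM02} and \cite{DMR-ANM10}. So there is no ``paper's own proof'' to compare against; your argument supplies what the paper omits.

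That said, your approach is the standard one and matches what one finds in the cited sources: a sign-change count on the intervals $(\zeta_k,x_k)$ to locate the zeros, implicit differentiation of $f(\eta_k(c),c)=0$ for monotonicity, and the difference-quotient trick for the rate. The reduction of $(ii)$ to $(i)$ via the reflection $x\mapsto -x$ is a clean way to avoid repeating the sign bookkeeping. One small remark: when you invoke implicit differentiation you are implicitly using that $f'(\eta_k)\neq 0$, which follows because the $n$ zeros you found are distinct and hence simple; it may be worth stating this explicitly, since the whole monotonicity argument rests on it.
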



\end{document}